\newcommand{\Hom}{\operatorname{Hom}\nolimits}
\renewcommand{\mod}{\operatorname{mod}\nolimits}
\newcommand{\id}{\operatorname{id}\nolimits}
\newcommand{\Ext}{\operatorname{Ext}\nolimits}
\newcommand{\Z}{\operatorname{\mathbb{Z}}\nolimits}
\newcommand{\Q}{\operatorname{\mathbb{Q}}\nolimits}
\newcommand{\R}{\operatorname{\mathbb{R}}\nolimits}
\newcommand{\N}{\operatorname{\mathbb{N}}\nolimits}
\newcommand{\udim}{\operatorname{\underline{dim}}\nolimits}
\newcommand{\lcm}{\operatorname{lcm}\nolimits}
\newtheorem{theo}{Theorem}[section]
\newtheorem{cor}[theo]{Corollary}
\newtheorem{lemma}[theo]{Lemma}
\newcommand{\ko}{\;\; ,}
\begin{document}
\baselineskip=15pt
\title[Linear recurrence relations]{Linear recurrence relations for cluster variables of affine quivers}
\author{Bernhard Keller and Sarah Scherotzke}
\address{Universit\'e Paris Diderot - Paris~7, UFR de Math\'ematiques, Institut de Math\'ematiques de Jussieu, UMR 7586 du CNRS, Case 7012, B\^atiment Chevaleret, 75205 Paris Cedex 13, France}

\email{keller@math.jussieu.fr, scherotzke@math.jussieu.fr}
%\date{\today}

\keywords{Cluster algebra, linear recurrence, frieze pattern, quiver representation} \subjclass[2000]{}

\begin{abstract}We prove that the frieze sequences of cluster variables
associated with the
vertices of an affine quiver satisfy linear recurrence relations. In particular, we obtain
a proof of a recent conjecture by Assem-Reutenauer-Smith.
 \end{abstract}

\maketitle

%\tableofcontents

\section{Introduction}
Caldero and Chapoton noted in \cite{CalderoChapoton06} that one obtains
natural generalizations of Coxeter-Conway's frieze patterns \cite{Coxeter71} \cite{CoxeterConway73a} \cite{CoxeterConway73b} when one constructs
the bipartite belt of the Fomin-Zelevinsky cluster algebra \cite{FominZelevinsky02}
associated with a (connected) acyclic quiver $Q$. Such a generalized frieze pattern
consists of a family of sequences of cluster variables, one sequence
for each vertex of the quiver. For simplicity, we call these sequences the
{\em frieze sequences} associated with the vertices of $Q$.
Recently, they have been studied by
Assem-Reutenauer-Smith \cite{AssemReutenauerSmith09} and
by Assem-Dupont \cite{AssemDupont10} for affine quivers $Q$.
They also appear implicitly in the work of Di Francesco and Kedem,
cf. for example \cite{DiFrancescoKedem10} \cite{DiFrancescoKedem10a}.

Our main motivation in this paper comes from a conjecture formulated by
Assem, Reutenauer and Smith  \cite{AssemReutenauerSmith09}: They
proved that if the frieze sequences associated
with a (valued) quiver $Q$ satisfy linear recurrence relations, then $Q$ is
necessarily affine or Dynkin. They conjectured that conversely, the frieze sequences
associated with a quiver of Dynkin or affine type always satisfy linear
recurrence relations.
For Dynkin quivers, this is immediate from Fomin-Zelevinsky's classification
theorem for the finite-type cluster algebras \cite{FominZelevinsky03}.
In \cite{AssemReutenauerSmith09},
Assem-Reutenauer-Smith gave an ingenious proof
for the affine types $\tilde{A}$ and $\tilde{D}$
as well as for the non simply laced types obtained from these by folding.
For the exceptional affine types, the conjecture remained open.

In this paper, we prove Assem-Reutenauer-Smith's conjecture in full generality
using the representation-theoretic approach to cluster algebras
pioneered in \cite{MarshReinekeZelevinsky03}. More precisely, our
main tool is the categorification of acyclic cluster algebras via cluster categories
(cf. e.g. \cite{Keller08c}) and especially the cluster multiplication
formula of \cite{CalderoKeller06}.
Our method also yields a new proof for $\tilde{A}$ and $\tilde{D}$. It leads to linear recurrence relations which are explicit for the
frieze sequences associated with the extending vertices and which allow
us to conjecture explicit minimal linear recurrence relations for all vertices.

\section*{Acknowledgment}
The first-named author thanks Andr\'e Joyal and Christophe
Reutenauer for their kind hospitality at the Laboratoire de
combinatoire et d'informa\-tique math\'ematique at the
Universit\'e du Qu\'ebec \`a Montr\'eal,  where he first learned
about the problem at the origin of this article. The second-named
author thanks Alastair King for a helpful discussion and La
Fondation Sciences Math\'ematiques de Paris for their financial
support.

\section{Main result and plan of the paper}
Let $Q$ be a finite quiver without oriented cycles. We assume
that its vertices are numbered from $0$ to $n$ in such a way
that each vertex $i$ is a sink in the full subquiver on the
vertices $1$, \ldots, $i$. We introduce a total order on the set $\N \times
Q_0$ by requiring that $(j,i) \le (j', i')$ if we have $j < j' $
or both $j=j'$ and $i\ge i'$ hold.

The {\em generalized frieze pattern associated with $Q$}
is a family of sequences $(X^i_j)_{j\in\N}$ of elements of the
field $\Q(x_0, \ldots , x_n)$, where $i$ runs through the vertices of $Q$.
We recursively define these {\em frieze sequences} as follows: we set
$X^i_0=x_i$ for all vertices $i$ of the quiver. Once $X^{i'}_{j'}$ has been
defined for every pair $(j',i')< (j+1,i)$, we define $X^i_{j+1}$ by the
equality
\[
X^i_{j+1}X^i_{j}=1+\prod_{s\to i} X^s_{j+1}\prod_{i\to d} X^d_j.
\]
Note that the elements of the frieze sequences defined in this way
are cluster variables of the cluster algebra $\mathcal{A}_Q$ associated
with $Q$, cf. \cite{CalderoChapoton06} \cite{Keller08c}.
The aim of this paper is to
show the following result:
\begin{theo}
If $Q$ is an affine quiver, then every frieze sequence $(X^i_j)_{j\in \N}$ satisfies a linear
recurrence relation.
\end{theo}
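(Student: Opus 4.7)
The plan is to use the categorification of the acyclic cluster algebra $\mathcal{A}_Q$ by the cluster category $\mathcal{C}_Q$ of $kQ$ together with the Caldero--Chapoton cluster character $X_{?}$ and the cluster multiplication formula of \cite{CalderoKeller06}. The first step is to identify each frieze sequence with the orbit of a single initial object under iteration of a canonical autoequivalence. The mutation pattern prescribed by the chosen vertex ordering -- mutating at vertices $1,2,\dots,n$ in order and repeating -- corresponds on the categorical side to repeated application of (a shift of) the inverse Auslander--Reiten translate $\tau^{-1}$. This gives a representation-theoretic formula of the shape $X^i_j = X_{\tau^{-j} M_i}$ for a fixed initial object $M_i\in\mathcal{C}_Q$ (a shifted projective).

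Next, I would use the well-known structure of the Auslander--Reiten quiver of the path algebra of an affine quiver: it decomposes into a preprojective component, a $\mathbb{P}^1$-family of tubes, and a preinjective component. The $\tau^{-1}$-orbit of $M_i$ has two qualitatively different behaviours. In a tube of rank $p$ the orbit is periodic of period $p$, so the contributions of the regular summands to $X^i_j$ are eventually periodic in $j$ and trivially satisfy a linear recurrence. In the preprojective or preinjective component the orbit is infinite, but the dimension vectors $\udim(\tau^{-j} M_i)$ satisfy a fixed linear recurrence whose companion polynomial is (a divisor of) the Coxeter polynomial of $Q$.

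The third step, and the heart of the argument, is to upgrade the linear recurrence on dimension vectors to a linear recurrence on the characters themselves. The main tool is the cluster multiplication formula applied iteratively to the chain of AR triangles linking consecutive members of a $\tau^{-1}$-orbit in the preprojective (or preinjective) component. This yields multiplicative identities of the form $X^i_{j+1}\,X^i_{j-\ell} = P_j\bigl(X^s_{j'}\bigr)$ with $P_j$ a polynomial in earlier frieze variables; combining them across the preprojective, regular and preinjective pieces, and using the periodic relations coming from the tubes to clear common factors, produces a genuine linear recurrence with coefficients in $\Q(x_0,\dots,x_n)$. For the extending vertex the construction can be made fully explicit, and the characteristic polynomial of the recurrence is the Coxeter polynomial of the underlying Dynkin diagram.

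The main obstacle lies in this last step: although the Coxeter transformation acts linearly on the Grothendieck group, the cluster characters $X_M$ depend on $M$ through Euler characteristics of quiver Grassmannians $\mathrm{Gr}_e(M)$, which are highly nonlinear in the dimension vector. Making the recurrence linear therefore requires showing that, after application of the cluster multiplication formula to each AR triangle in the orbit, the nonlinear contributions from the preprojective/preinjective summands and from the tubes combine so as to cancel. This is exactly where the special geometry of the affine root system -- the separation into real and imaginary roots, the $\tau$-periodicity of the regular component, and the rigidity of indecomposable preprojective modules -- enters in an essential way, and where one also obtains the explicit form of the linear recurrence for the extending vertex.
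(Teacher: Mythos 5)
Your setup (Steps 1--2: $X^i_j = X_{\tau^{-j+1}P_i}$ via the Caldero--Chapoton map, and the use of the Coxeter transformation on the affine root system) matches the paper, but the heart of your argument is missing, and you say so yourself: you defer to an unproven ``cancellation of nonlinear contributions'' obtained by iterating the multiplication formula over the chain of Auslander--Reiten triangles along the $\tau^{-1}$-orbit. This cannot work as stated. The AR triangles give exactly the defining frieze relations $X^i_{j+1}X^i_j = 1 + \prod(\cdots)$, which are irreducibly nonlinear; no amount of combining them with periodicity in the tubes produces a linear relation, and in fact the $\tau^{-1}$-orbit of $P_i$ lies entirely in the preprojective component, so there are no ``regular summands'' of $\tau^{-j}P_i$ contributing periodically to $X^i_j$ in the first place. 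The idea you are missing is the \emph{generalized exchange triangle} in which the third term is a non-rigid regular simple module $M$ with $\udim M = \delta$: from $c^b = \id - m\langle -,\delta\rangle\delta$ one gets $\udim \tau^{-b}P_i = \udim P_i + \delta$ for an extending vertex $i$, a defect argument produces a short exact sequence $0 \to P_i \to \tau^{-b}P_i \to M \to 0$ with $\dim\Ext^1_{\mathcal{C}_Q}(M,P_i)=1$, and the Caldero--Keller formula (which requires only one-dimensionality of the extension space, not rigidity) then gives $X_M X^i_j = X^i_{j-b} + X^i_{j+b}$. Since $X_M = X_\delta$ is one fixed element of $\mathcal{F}$, independent of the choice of $M$ in the $\mathbb{P}^1$-family (Dupont's lemma), this \emph{is} already a linear recurrence with coefficients in $\mathcal{F}$ --- no cancellation mechanism is needed. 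Note also that its annihilating polynomial is $\lambda^{2b} - X_\delta\lambda^b + 1$, not the Coxeter polynomial, and $b$ is not the Coxeter number.

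Even granting the extending-vertex case, your proposal does not address how to reach the remaining vertices, which is a genuine second half of the proof: one needs further (generalized) exchange triangles relating $P_l$ to projectives at extending vertices and to regular modules in exceptional tubes (whose character sequences are periodic), together with the closure of linearly recurrent sequences under sums and Hadamard products; for type $\tilde D_n$ with $n$ odd one must first compose two relations exchanging the two extending vertices via the diagram automorphism, and for $\tilde A_{p,q}$ with $p\neq q$ the argument goes through a vector-valued recurrence $v(j+n+1)=\sum_t A_t v(j+t)$ for the tuple of all frieze sequences. As it stands, your text is a plausible plan whose decisive step is exactly the statement to be proved.
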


This confirms the main conjecture of Assem-Reutenauer-Smith's
\cite{AssemReutenauerSmith09}. They proved it
for the case where $Q$ is of type $\tilde A$ or $\tilde D$
(and for the non simply laced types obtained from these
by folding).
We will provide a new
proof for these types and an extension to the exceptional types.
Following \cite{AssemReutenauerSmith09} we show in section~9,
using the folding technique,
that the theorem also holds for affine valued quivers.

Our proof is based on the additive categorification
of the cluster algebra $\mathcal{A}_Q$ by the cluster category of
$Q$ as introduced in \cite{BuanMarshReinekeReitenTodorov06}.
In addition to the cluster category, the main ingredient of our proof
is the Caldero-Chapoton map \cite{CalderoChapoton06}, which takes each object
of the cluster category to an element of the field $Q(x_0,\ldots , x_n)$.
Under this map, the exchange relations used to define the cluster variables
are related to certain pairs of triangles in the cluster category,
called exchange triangles. We will obtain linear recurrence
relations from `generalized exchange relations' obtained via the
Caldero-Chapoton map from `generalized exchange triangles'.

The main steps of the proof are as follows:

{\em Step 1.} We describe the action of the Coxeter transformation
on the root system of an affine quiver.

{\em Step 2. } We show that the frieze sequence associated with a vertex
$i$ of the quiver is the image under the Caldero-Chapoton map of the
$\tau$-orbit of the projective indecomposable module associated with
the vertex~$i$.

{\em Step 3. } We prove the existence of generalized exchange
triangles in the cluster category of an affine quiver using Step 1.

{ \em Step 4.} By Step 2 we can deduce relations between the
frieze sequences associated with vertices of the quiver from the
generalized exchange triangles constructed in Step 3.

{\em Step 5.} The relations between frieze sequences obtained in Step 4 are
either linear recurrence relations or they show that a fieze sequence is a
product or sum of sequences that satisfy a linear recurrence. Hence
all frieze sequences satisfy a linear recurrence relation.

In section~1, we study the action of the Coxeter
transformation $c$ of an affine quiver on the roots corresponding
to preprojective indecomposables. We use this result to
determine, for every affine quiver, the minimal strictly positive
integers $b$ and $m$ such that $c$ satisfies
$c^b=\id+m\langle-, \delta \rangle \delta$, where $ \langle -,-
\rangle$ denotes the Euler form of the quiver. Let us stress that
$b$ is {\em not} the Coxeter number of the associated finite root system.

In section~2, we briefly recall the cluster category of a quiver without
oriented cycles. We introduce the Caldero-Chapoton map from the class of objects
of the cluster category to $\Q(x_0,\ldots, x_n)$ and define exchange
triangles and generalized exchange triangles of the cluster
category. We state a result which describes how a pair of exchange
triangles determines an equation between the images of the objects
appearing in the triangles under the Caldero-Chapoton map. Then we show that the
frieze sequence associated with a vertex $i$ is obtained by applying the
Caldero-Chapoton map to the $\tau$-orbit of the projective indecomposable module
associated with $i$ viewed as an object in the cluster category.

In the third section, we give conditions, in the case of an affine
quiver, for the existence of certain generalized exchange triangles.
We deduce linear recurrence relations from these generalised exchange
triangles, using the results of the previous section.

In the next three section, we show that the main theorem holds for
affine quivers. In doing so we use results of section~1 to show that
the conditions of section~3 are satisfied. The exchange
triangles yield relations between frieze sequences that prove that the
sequences satisfy linear recurrence relations.

In section~6, we prove the main theorem for affine
quivers of type $\tilde D$ and in section~7 for all exceptional
affine quivers. Here the linear recurrence relations are given explicitly
for frieze sequences associated with extending vertices. For all other
frieze sequences, the existence of a linear recurrence is proven by showing
that every sequence associated with a vertex can be written as a
product or a linear combination of sequences satisfying a linear recurrence.

In section~8, we prove the main theorem for affine quivers of
type $\tilde A_{p,q}$. Here the explicit linear recurrence relations are
given only if $p$ equals $q$. Otherwise, the existence of a linear
recurrence relation is shown simultaneously for all frieze sequences
by considering the sequence of vectors in $ \Q(x_0, \ldots ,x_n)^{n+1}$
whose $i$th coordinate is given by the entries of the frieze sequence
associated with the vertex $i$ for all vertices $i$ of the quiver.

In section~9, we extend the main theorem to
valued quivers using the folding technique and in the
final section~10, we conjecture explicit minimal linear
recurrence relations.

\section{On the Coxeter transformation of an affine quiver}

We first fix the notation and recall some basic facts.
We refer to \cite{CrawleyBoevey92} and \cite{AssemSimsonSkowronski06}
for an introduction to quivers and their representations.

Let $Q$ be an affine quiver, i.e.~a quiver whose underlying graph is
an extended simply laced Dynkin diagram $\tilde{\Delta}$. The {\em type of $Q$}
is the diagram $\tilde{\Delta}$ except if we have $\tilde{\Delta}=\tilde{A}_n$,
in which case the type of $Q$ is $\tilde{A}_{p,q}$ where, for a chosen
cyclic orientation of the underlying graph of $Q$, the number of positively
(respectively negatively) oriented arrows equals $p$ (respectively $q$).
We number the vertices of $Q$ from $0$ to $n$ and define the
{\em Euler form} of $Q$ as the bilinear form $\langle, \rangle$ on
$\Z^{n+1}$ such that, for $a$ and $b$ in $\Z^{n+1}$, we have
\[
\langle a,b\rangle = \sum_{i=0}^n a_i b_i - \sum_{i,j=0}^n c_{ji} a_i b_j \ko
\]
where $c_{ij}$ is the number of arrows from $i$ to $j$ in $Q$. The
{\em symmetrized Euler form} $(,)$ is defined by
\[
(a,b)=\langle a,b\rangle+\langle b,a \rangle
\]
for $a$ and $b$ in $\Z^{n+1}$. A {\em root} for $Q$ is a non zero vector
$\alpha$ in $\Z^{n+1}$ such that $(\alpha,\alpha)/2\leq 1$; it is {\em real} if
we have $(\alpha,\alpha)/2=1$ and {\em imaginary} if $(\alpha,\alpha)=0$.
It is {\em positive} if all of its components are positive. The
{\em root system $\Phi$} is the set of all roots. There is a unique root $\delta$
with strictly positive coefficients whose integer multiples form the radical
of the form $(,)$ (cf. Chapter~4 of \cite{CrawleyBoevey92}). A vertex $i$
of $Q$ is an {\em extending vertex} if we have $\delta_i=1$. If $\alpha$ is a
real root, the {\em reflection} at $\alpha$ is the automorphism $s_\alpha$ of
$\Z^{n+1}$ defined by
\[
s_\alpha(x) = x - (\alpha,x) \alpha.
\]
For each vertex $i$, the {\em simple root $\alpha_i$} is the
$(i+1)$th vector of the standard basis of $\Z^{n+1}$. Let us number
the vertices in such a way that each vertex $i$ is a sink of the
full subquiver of $Q$ on the vertices $0$, \ldots, $i$. Using this
ordering, we define the {\em Coxeter transformation} of $Q$ to
be the composition
\[
c= s_{\alpha_0} s_{\alpha_1}  \cdots  s_{\alpha_n}.
\]
We have
\[
\langle x,y \rangle = - \langle y, cx \rangle
\]
for all $x$ and $y$ in $\Z^{n+1}$.

Let $k$ be an algebraically closed field and $kQ$ the path algebra of $Q$
over $k$. Let $\mod kQ$ be the category of $k$-finite-dimensional right
$kQ$-modules. For a vertex $i$ of $Q$, we denote the simple module
supported at $i$ by $S_i$, its projective cover by $P_i$ and its
injective hull by $I_i$. The map taking a module $M$ to its
{\em dimension vector}
\[
\udim M= (\dim \Hom(P_i, M))_{i=0\ldots n}
\]
induces an isomorphism from the Grothendieck group of $\mod kQ$ to
$\Z^{n+1}$. By Kac's theorem, the dimension vectors of the indecomposable
modules are precisely the positive roots. For two modules $L$ and $M$, we have
\[
\langle \udim L, \udim M\rangle = \dim \Hom(L,M)- \dim \Ext^1(L,M).
\]
For an indecomposable non injective module $M$, we have
\[
c^{-1} \udim M = \udim \tau_m^{-1} M \ko
\]
where $\tau$ is the Auslander-Reiten translation
of the module category $\mod kQ$.

\begin{theo}\label{delta}
There exist a strictly positive integer $b$ and a non zero
integer $m$ such that $c^b= \id - m \langle -, \delta \rangle
\delta$. The integer $b$ is a multiple of the width of the tubes
in the Auslander-Reiten quiver of $Q$.
\begin{itemize}
\item[(1)] For $Q$ of type $\tilde E_t$, the minimal $b$ is given by $b=6$
for $t=6$; $b=12$ for $t=7$ and $b=30$ for $t=8$. In all those cases
$m$ is equal to 1.

\item[(2)] For $Q$ of type $ \tilde D_n$, we have for even $n$ that $b=n-2$
and $m=1$; if $n$ is odd, we have $b=2n-4$ and $m=2$.

\item[(3)] For $Q$ of type $ \tilde A_{p,q}$, the minimal $b$ is the least common
multiple of $p$ and $q$ and $m$ is the order of the class of $q$ in
the additive group $\Z / (p+q) \Z$.
\end{itemize}
\end{theo}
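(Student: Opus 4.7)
The plan is to exploit the interplay between the Coxeter transformation $c$ on $V=\Q^{n+1}$ and the Auslander--Reiten structure of $\mod kQ$. The guiding observation is that the hyperplane $H=\{x\in V\mid\langle x,\delta\rangle=0\}$ coincides with the span of the dimension vectors of the regular indecomposables, and on each exceptional tube of width $w$ the transformation $c$ acts as a cyclic permutation of order $w$, via $c(\udim M)=\udim\tau M$ for non-projective $M$. As soon as $b$ is a common multiple of all tube widths, $c^b|_H$ is therefore the identity, which forces $c^b-\id$ to have rank at most one.

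In detail: first, from $(\alpha_i,\delta)=0$ for every simple root one gets $s_{\alpha_i}(\delta)=\delta$, so $c(\delta)=\delta$. Combining this with the identity $\langle x,y\rangle=-\langle y,cx\rangle$ and the radical property $(\delta,x)=0$ yields $\langle cx,\delta\rangle=\langle x,\delta\rangle$, so $c$ preserves $\phi=\langle-,\delta\rangle$ and stabilises $H=\ker\phi$. Second, regulars have defect zero hence lie in $H$; a count of quasi-simples from the exceptional tubes together with $\delta$ (contributed by the homogeneous tubes) gives $n$ independent vectors, so the regular span equals $H$. Third, since $c$ cyclically permutes the $w_j$ quasi-simple dimension vectors of the $j$-th exceptional tube, taking $b$ to be any common multiple of the $w_j$ gives $c^b|_H=\id$. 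Fourth, pick $e\notin H$ and write $ce=e+v$ with $v\in H$; a telescoping induction yields $c^be=e+(I+c+\cdots+c^{b-1})v$, and $(I+c+\cdots+c^{b-1})v$ is annihilated by $c-\id$. On each tube, the $1$-eigenspace of $c$ is spanned by $v_1+\cdots+v_{w_j}=\delta$, so the $1$-eigenspace of $c|_H$ equals $\Q\delta$, whence $(c^b-\id)(V)\subseteq\Q\delta$. Any linear map $V\to\Q\delta$ vanishing on $H=\ker\phi$ is a rational multiple of $\phi(-)\delta$, so $c^b=\id-m\langle-,\delta\rangle\delta$ for a unique $m\in\Q$; this $m$ is non-zero because preprojective vectors $c^{-k}\udim P_i$ grow without bound, and it is integral because $c^b-\id$ preserves $\Z^{n+1}$.

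It remains to evaluate the minimal $b$ and the value of $m$ in each type. By Dlab--Ringel, the exceptional tube widths are $(p,q)$ for $\tilde{A}_{p,q}$, $(2,2,n-2)$ for $\tilde{D}_n$ and $(2,3,3), (2,3,4), (2,3,5)$ for $\tilde{E}_6, \tilde{E}_7, \tilde{E}_8$; taking $\lcm$ yields exactly the values of $b$ asserted in (1)--(3). The integer $m$ is then pinned down by evaluating the equation $c^b=\id-m\langle-,\delta\rangle\delta$ on a concrete simple root $\alpha_i$ at an extending vertex and solving the resulting scalar relation. I expect the main obstacle to be the $\tilde{A}_{p,q}$ case with $p\neq q$, where the two exceptional tubes have different widths and their combined effect on the root lattice must be tracked carefully to identify $m$ as the order of the class of $q$ in $\Z/(p+q)\Z$; the $\tilde{D}$ and $\tilde{E}$ cases reduce to short direct verifications.
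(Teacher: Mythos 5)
Your structural reduction is sound in outline and close in spirit to the paper's: both arguments rest on the facts that the defect-zero hyperplane is spanned by regular dimension vectors and that $c$ permutes the quasi-simples of each exceptional tube cyclically. But there are three concrete problems. First, your deduction that $(c^b-\id)(V)\subseteq\Q\delta$ needs the claim that the fixed space of $c$ on $H$ is exactly $\Q\delta$, and your justification (``on each tube the $1$-eigenspace is spanned by $\delta$, so the $1$-eigenspace of $c|_H$ equals $\Q\delta$'') does not follow as stated: the spans of the quasi-simples of the various tubes are \emph{not} in direct sum (they all contain $\delta$, and their dimensions $w_j$ add up to more than $\dim H$), so a $c$-fixed vector of $H$ cannot simply be split into tube-wise fixed vectors. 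The claim is true and repairable --- pass to $H/\Q\delta$, where the images of the tube spans do form a direct sum of dimensions $w_j-1$ with $\sum_j(w_j-1)=n-1$, or invoke the classical fact that a vector fixed by a Coxeter element $s_{\alpha_0}\cdots s_{\alpha_n}$ built from a basis of simple roots lies in the radical of $(-,-)$, which is $\Q\delta$ --- but as written this step is a gap. (The paper sidesteps it by first obtaining $c^b\equiv\id$ modulo $\Z\delta$ from the finiteness of the image of the root system in $\Z^{n+1}/\Z\delta$, so that the image of $c^b-\id$ lies in $\Z\delta$ automatically, and then only has to check vanishing on regular dimension vectors.)

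Second, you only prove that every common multiple of the tube widths is an admissible exponent; the theorem also asserts that every admissible $b$ is a multiple of the tube widths, and the minimality of the values listed in (1)--(3) depends on this converse (it follows because the quasi-simple dimension vectors of an exceptional tube are pairwise distinct, are cyclically permuted by $c$, and are fixed by $c^b$ since they have defect zero --- but you never argue it). Third, and most importantly, the values of $m$ are not proved. Knowing $c^b=\id-m\langle-,\delta\rangle\delta$ and the tube widths does not determine $m$: one must actually compute $c^b$ on a root $\alpha$ with $\langle\alpha,\delta\rangle=1$, i.e.\ track how many copies of $\delta$ a defect $-1$ orbit accumulates after $b$ steps. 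The paper does exactly this, via an explicit Euclidean coordinate model ($e_i$-basis and an explicit formula for $c$) in types $\tilde D_n$ and $\tilde A_{p,q}$, and by machine verification for $\tilde E_t$; you defer this computation (``pinned down by evaluating \dots and solving''), and you yourself flag $\tilde A_{p,q}$ with $p\neq q$ as unresolved. Since the explicit values $m=2$ for odd $\tilde D_n$ and $m=\lcm(p,q)/p+\lcm(p,q)/q$ (the order of $q$ in $\Z/(p+q)\Z$) for $\tilde A_{p,q}$ are the substantive content of parts (2) and (3), and are what the later sections of the paper actually use, this is a genuine missing piece rather than mere brevity.
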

We will give a uniform interpretation of the integer $b$ in
Lemma~\ref{lemma:interpret-b} below. Let us stress that, contrary to
a common misconception, it is {\em not} the Coxeter number of the
corresponding finite root system.

\begin{proof}
The automorphism induced by $c$ permutes the elements of the image
of $\Phi\cup \{0\}$ in $\Z^{n+1}/\Z \delta$. This image is finite
(see \cite[7]{CrawleyBoevey92}) and generates $\Z^{n+1}/\Z \delta $. Therefore there exists
 a strictly positive integer $b$ such that $c^b$ induces the
identity on $\Z^{n+1}/\Z\delta$. It follows that there is a linear
form $f: \Z^{n+1} \to \Z$ such that $c^b-\id$ is equal to $\langle
f,- \rangle\delta$. In order to show that $f$ is a multiple of
$\langle -, \delta \rangle$, as $\langle -, \delta\rangle$ is
primitive, it is sufficient to show that $f$ vanishes on the
kernel of $\langle -, \delta \rangle$. By \cite[7]{CrawleyBoevey92} the kernel
is generated by the dimension vectors of the regular modules.
Clearly it is enough to verify that $f$ vanishes on the dimension
vectors of the regular simple modules. Let $M$ be such a module.
If $M$ lies in a homogenous tube, its dimension vector is $\delta$
and $f(\delta)$ vanishes by construction. Let us therefore assume
that $M$ is in an exceptional tube of width $s>1$. The dimension
vectors of $\udim M$, $\udim \tau M, \ldots, \udim \tau^{s-1} M$
are non-zero and have sum $\delta$. It follows that they are real
roots and two by two distinct. Moreover the difference between two
of these vectors is not a non-zero multiple of $\delta$. Therefore
their images in $\Z^{n+1}/ \Z \delta$ are pairwise distinct. We
must therefore have $c^b(\udim M)=\udim M$ and $f( \udim M)$
vanishes. This argument also shows that the widths of the tubes
divide $b$.

(1) The values of $b$ and $m$ for the exceptional quivers can be
verified by direct computation using for example the cluster mutation applet \cite{KellerQuiverMutationApplet}.

For the other cases, we need a more detailed description of the
roots and of the Coxeter transformation. Let $Q'$ be the Dynkin
quiver obtained from $Q$ by deleting the extending vertex $0$ and
all arrows adjacent to it. Let $\alpha_1, \ldots, \alpha_n$ be the
root basis of $Q'$ consisting of the dimension vectors of the
simples and let $\theta$ be the highest root of $Q'$. Via the
inclusion of $Q'$ into $Q$ we identify the roots of $Q'$ with
their image in $\Z^{n+1}$. Then the dimension vector of the simple
at the vertex $0$ is $\alpha_0=\delta-\theta$.

(2) We choose the following labeling and orientation on $\tilde
D_n$:

\[\xymatrix{ n \ar[rd]& &&& 1 \\& n-2  \ar[r]& \ar[r] \cdots & 2 \ar[ru] \ar[rd]& \\
n-1 \ar[ru]  &&&& 0. }\]Let $e_1, \ldots, e_n$ be the vectors in $\R^{n+1}$ defined by
$$ \alpha_i=e_i-e_{i+1} \mbox { for }1 \le i \le n-1 \mbox{ and } \alpha_n=e_n+e_{n-1}.$$ Then if we extend the form $(-,-)$ to $\R^{n+1}$, we have $(e_i,e_j)=\delta_{i,j}$ and $(e_i, \delta)=0$.
Furthermore $\theta$ equals $e_1+e_2$ and $\alpha_0$ equals
$\delta-e_1-e_2$. The reflections $s_{\alpha_i}$ for $1 \le i \le
n-1$ act as the transposition of $e_i$ and $e_{i+1}$. The
reflection at $\alpha_n$ maps $e_n$ to $-e_{n-1}$ and $e_{n-1} $
to $-e_n$. The reflection at $\alpha_0$ is given by $e_1 \mapsto
-e_2+\delta $ and $e_2 \mapsto -e_1 +\delta$.

We see that $c=s_{\alpha_0} \cdots s_{\alpha_{n}}$ acts up to
multiples of $\delta$ as the $(n-2)$-cycle on $e_2, \ldots,
e_{n-1} $ and inverses the sign of $e_1$ and $e_n$.  So $c$ maps
$e_i$ to $e_{i+1}$ for $2 \le i < n-1$ and $e_{n-1}$ to
$e_2-\delta$ and $e_1$ to $-e_1+\delta$ and $e_n$ to $- e_n$. Then
$c^{n-2}$ corresponds to the action \[
e_1 \mapsto \left \{ \begin{array}{ll} e_1 & \mbox{ if $n$ is even }\\
                              -e_1+\delta & \mbox{ else.}
\end{array} \right. \] and $e_i \mapsto e_i-\delta$ for $2 \le i \le n-1$ and $e_n
\mapsto (-1)^{n-2}e_n$. Therefore we have that $b$ equals $n-2$ if $n$
is even and $b$ equals $2n-4$ if $n$ is odd. We see that $c^{n-2}
$ maps $ \alpha_n$ to $\alpha_n-\delta$ if $n$ is even and it maps
$\alpha_n$ to $\alpha_{n-1} -\delta $ if $n $ is odd. As $\langle
\alpha_n , \delta \rangle =1 $, this shows that $m$ is equal to
$1$ for $n$ even and $m$ is equal to $2$ if $n$ is odd.

(3) We consider the case $\tilde A_{p,q}$ with $q \le p$. We choose
the following orientation and labeling on $\tilde A_{p,q}$:

\[ \xymatrix{&&& 0 &&&\\ 1 \ar[urrr] &  \cdots \ar[l] & q-1 \ar[l] & q \ar[r] \ar[l] & q+1 \ar[r] & \cdots \ar[r] & p+q-1 \ar[ulll]}\]
Let $E$ be a real vector space with basis $e_1, \ldots , e_{n+1},
d$. We endow $E$ with a symmetric bilinear form $(-,-)$ such that
$(e_i, e_j)=\delta_{i,j} $ and $(d, e_i)=(d, d)=0$.  We have an
isometric embedding of $\Z^{n+1} $ into $E$ taking
\[\alpha_i \mapsto e_i-e_{i+1} \mbox{ for } 1\le i \le n \mbox{ and } \delta \mapsto d. \]Then $\theta$ is mapped to $e_1-e_{n+1}$ and $\alpha_0 $ is mapped to $d-e_1+e_{n+1}$. From now on, we identify $\Z^{n+1}$ with a subset of $E$ using this embedding.  The reflection
$s_{\alpha_i}$ acts as the transposition of $e_i$ and $e_{i+1}$ for
$1 \le i \le n$. The reflection at $\alpha_0$ maps $e_1$ to
$e_{n+1}+\delta $ and $e_{n+1}$ to $e_1 -\delta$. Then $c$ is given
by the product $s_{\alpha_0} \cdots
s_{\alpha_{q-1}}s_{\alpha_{p+q-1}}\cdots s_{\alpha_q}$. The action
of $c$ is up to multiples of $\delta$ the product of the $q$-cycle
on $e_1, \ldots ,e_q$ and the $p$-cycle on $e_{p+q},\ldots,
e_{q+1}$. More concretely, we have $e_i \mapsto e_{i+1}$ for $1\le i
\le q-1$, $e_q \mapsto e_1-\delta $ and $e_i \mapsto e_{i-1} $ for
$q+2 \le i \le p+q$ and $e_{q+1} \mapsto e_{p+q}+\delta$. Then
$c^{\lcm(p,q)}$ corresponds to the action $e_i \mapsto
e_i-(\lcm(p,q)/q) \delta$ for $1 \le i \le q$ and $e_i \mapsto
e_i+(\lcm(p,q)/p) \delta$ for $q+1 \le i \le p+q$. Therefore we have
$b=\lcm(p,q)$. We verify that $c^{\lcm(p,q)} $ maps $ \alpha_q$ to
$\alpha_q-(\lcm(p,q)/p+\lcm(p,q)/ q) \delta$. As $\langle \alpha_q ,
\delta \rangle =1 $, this shows that $m$ is equal to
$\lcm(p,q)/p+\lcm(p,q)/ q$ which is the order of the class of $q$ in
$\Z/ (p+q) \Z$.
\end{proof}
We have more information when $Q$ is of type $\tilde E_t$ for
$t=6$ and $t=7$. Then, for each $i\in Q_0$ there are positive integers
$k_i$ such that $k_i \delta_i=b$. These $k_i$ satisfy $c^{k_i} \udim
P_i= \udim P_i- \delta$.

Let us give a uniform interpretation of the integer $b$ of the Theorem.
We use the notations of the above proof.
Let $c'$ denote the Coxeter transformation of the Dynkin quiver $Q'$.
Let $\bar c$ be the automorphism on $\Z^{n+1}/\Z \delta$ induced
by $c$.
\begin{lemma} \label{lemma:interpret-b}
The automorphism $\bar c$ equals $s_{\theta} c'$. Hence $b$ equals
the order of the element $s_{\theta} c'$ in the Weyl group of $Q'$.
\end{lemma}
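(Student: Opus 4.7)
The plan is to reduce the lemma to a short direct computation. Since the vertex $0$ is a sink of $Q$, the Coxeter element factors as $c = s_{\alpha_0}\, c'$, where $c' = s_{\alpha_1}\cdots s_{\alpha_n}$ is, by construction, the Coxeter transformation of $Q'$. Each $s_{\alpha_i}$ fixes $\delta$ (since $(\alpha_i,\delta)=0$), so all the reflections involved descend to automorphisms of $\Z^{n+1}/\Z\delta$, and the identity $\bar c = s_{\theta}\, c'$ reduces to showing that $\bar s_{\alpha_0} = \bar s_{\theta}$ on this quotient.

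The key observation, already used in the proof of Theorem~\ref{delta}, is that $\alpha_0 = \delta - \theta$. Together with $(\delta,-)=0$ (as $\delta$ is in the radical of the symmetrized Euler form), this yields, for any $x \in \Z^{n+1}$,
\[
s_{\alpha_0}(x) = x - (\delta-\theta,x)(\delta-\theta) = x + (\theta,x)\delta - (\theta,x)\theta = s_{\theta}(x) + (\theta,x)\,\delta,
\]
so indeed $s_{\alpha_0}(x) \equiv s_{\theta}(x) \pmod{\delta}$. This proves the first assertion.

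For the second assertion, I would use the decomposition $\Z^{n+1} = \Z\delta \oplus L'$, where $L'$ is the sublattice spanned by $\alpha_1,\ldots,\alpha_n$, to identify the quotient $\Z^{n+1}/\Z\delta$ with the root lattice $L'$ of $Q'$; the induced symmetric form matches the one of $Q'$. By the first paragraph of the proof of Theorem~\ref{delta}, the integer $b$ is precisely the order of $\bar c$; since the Weyl group of the Dynkin quiver $Q'$ acts faithfully on $L'$, this order coincides with the order of $s_{\theta}\, c'$ as an element of the Weyl group of $Q'$. The whole argument is essentially a one-line algebraic identity plus a standard identification of lattices, so I do not foresee any serious obstacle; the only thing to be careful about is keeping track of the two compatible viewpoints on $\theta$, namely as a real root of $Q$ lying in $L'$ and as the highest root of the Dynkin subsystem $Q'$.
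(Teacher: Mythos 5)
Your proposal is correct and follows essentially the same route as the paper: factor $c=s_{\alpha_0}c'=s_{\delta-\theta}c'$, use that $\delta$ lies in the radical of $(-,-)$ to get $s_{\delta-\theta}(x)\equiv s_{\theta}(x) \pmod{\Z\delta}$, and identify $\Z^{n+1}/\Z\delta$ with the root lattice of $Q'$ so that $b$, the order of $\bar c$, is the order of $s_{\theta}c'$ in the Weyl group of $Q'$. (The aside that vertex $0$ is a sink of $Q$ is neither needed nor guaranteed by the chosen admissible numbering, but it plays no role in the argument.)
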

\begin{proof}
The embedding of the root system of $Q'$ in $Q$ given in the proof of
\ref{delta} yields an embedding of the Weyl group of $Q'$ into the
Weyl group of $Q$ such that every reflection at a root of $Q'$ fixes
$\delta$. Hence $c$ equals $s_{\delta-\theta}c'$. We can write every
element $y \in \Z^{n+1} $ as a linear combination of the roots of
$Q'$ and $\delta$. Let $y =j+t\delta$ where $j$ is a linear
combination of the roots of $Q'$ and $t \in \Z$. Then
$$s_{\delta-\theta}(j+t\delta)=j-(\theta,j)\theta+(t+(\theta,j))\delta=s_{\theta}(j)+(t+(\theta,j))\delta.$$
Therefore the action of $s_{\delta-\theta}$ modulo $\delta$ equals
the action of $s_{\theta}$. As $c'$ fixes $\delta$, we have $\bar
c=s_{\theta} c'$ and $b$ is the order of $s_{\theta} c'$.
\end{proof}
We denote by $\sigma$ the automorphism on $\tilde D_n$ with $\sigma
1=0$, $\sigma 0=1 $ and $\sigma n=n-1$, $\sigma (n-1)=n$ and
$\sigma$ fixes all other vertices of $\tilde D_n$. Recall that the
extending vertices of $\tilde D_n$ are precisely $0$, $1$, $n-1$ and
$n$.

\begin{lemma}\label{relation}
(a) Let $Q$ be of type $\tilde D_n$. Suppose that $n$ is odd and $i$
is an extending vertex of $Q$. Then $c^{n-2}(\udim P_i)= \udim
P_{\sigma i}-\delta$.

(b) For every vertex $i$ of $\tilde A_{p,q}$ we have
$c^{l_i}(\udim P_{i-q})=\udim P_i +\delta $, where $l_i=i-q$ for $
0\le i \le q$ and $l_i=\mbox{max}\{q-i, -q\}$ for $q < i$.
\end{lemma}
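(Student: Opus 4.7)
The plan is to prove both parts by direct computation in the bases introduced in the proof of Theorem~\ref{delta}, where the action of the Coxeter transformation on $e_1,\ldots, e_n$ (respectively on $e_1,\ldots,e_{n+1}$) and the expression of the simple roots $\alpha_i$ in these bases are already made explicit. For part~(a), I first read off $\udim P_i$ for the four extending vertices $i\in\{0,1,n-1,n\}$ of $\tilde D_n$ by summing the simple roots indexed by vertices reachable from $i$ via a directed path in the orientation fixed in the proof of Theorem~\ref{delta}(2); in the $\{e_j,\delta\}$--basis these are compact expressions such as $\udim P_0 = \delta-e_1-e_2$ and $\udim P_{n-1}=\delta-e_2-e_n$. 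Substituting the recipe for $c^{n-2}$ from that proof (with $n$ odd: $e_1\mapsto -e_1+\delta$, $e_j\mapsto e_j-\delta$ for $2\le j\le n-1$, $e_n\mapsto -e_n$, and $\delta$ fixed), collecting terms, and comparing with $\udim P_{\sigma i}$ using the explicit description of $\sigma$ (which swaps $0\leftrightarrow 1$ and $n-1\leftrightarrow n$) yields the claimed identity in each of the four cases; since the four vectors all live in the same small subspace, the verifications can be carried out in parallel.

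For part~(b), I proceed analogously in $\tilde A_{p,q}$, using the embedding of $\Z^{n+1}$ into $E=\mathrm{span}(e_1,\ldots,e_{n+1},d)$ from the proof of Theorem~\ref{delta}(3). The dimension vector $\udim P_i$ is again a sum over vertices reachable from $i$ along a directed path: for $i\le q$ only the ``left'' branch $q\to q-1\to \cdots \to 0$ contributes, for $i\ge q$ only the ``right'' branch does, and vertex $0$ is reached along both branches when $i=q$. Rewriting these in the $e_j$--basis and invoking the action of $c$, which up to multiples of $\delta$ is a $q$--cycle on $e_1,\ldots,e_q$ and a $p$--cycle on $e_{q+1},\ldots,e_{p+q}$, the claim becomes the observation that iterating $c$ a total of $l_i$ times transports the index $i-q$ to $i$ along the appropriate cycle, and the $\delta$--correction in the formula is exactly the net $\delta$--shift collected along the way.

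The main technical obstacle is the bookkeeping of these $\delta$--shifts in part~(b): a $\delta$ is picked up precisely when the wrap-around transitions $e_q\mapsto e_1-\delta$ or $e_{q+1}\mapsto e_{p+q}+\delta$ are invoked, and the number of such wrap-arounds depends on which of the three branches in the definition of $l_i$ applies, namely $0\le i\le q$, $q<i\le 2q$, or $i>2q$. Splitting the verification into these three cases, each becomes a short bookkeeping exercise and the identity of the lemma then follows in every case.
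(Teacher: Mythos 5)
Your overall strategy (direct computation in the explicit bases and with the explicit action of $c$ from the proof of Theorem~\ref{delta}) is the same as the paper's, but there is a genuine error in the key input: your identification of the dimension vectors $\udim P_i$. You compute $\udim P_i$ by summing the simple roots over the vertices reachable \emph{from} $i$, which gives, e.g., $\udim P_0=\alpha_0=\delta-e_1-e_2$ and $\udim P_{n-1}=\delta-e_2-e_n$ in type $\tilde D_n$. This is the opposite of the convention the paper actually uses (and needs): its own computations take $\udim P_{n-1}=\alpha_{n-1}$, $\udim P_1=\sum_{i=1}^n\alpha_i$ in type $\tilde D_n$, and $\udim P_i=\sum_{l=i}^q\alpha_l$, $\udim P_0=\delta+\alpha_q$ in type $\tilde A_{p,q}$; that is, the support of $P_i$ consists of the vertices admitting a directed path \emph{to} $i$, not from $i$. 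With your identifications the claimed identities fail with the opposite sign: for odd $n$ one gets
\[
c^{n-2}(\delta-e_1-e_2)=\delta-(-e_1+\delta)-(e_2-\delta)=e_1-e_2+\delta \ko
\]
i.e.\ $c^{n-2}(\udim P_0)=\udim P_1+\delta$ rather than $\udim P_{\sigma 0}-\delta$, and an analogous check in type $\tilde A_{p,q}$ (for $0<i<q$) gives $c^{q-i}(\udim P_i)=\udim P_{p+i}+\delta$ instead of $\udim P_{p+i}-\delta$. So carrying out your verification literally would contradict the statement rather than prove it; the fix is simply to use the correct dimension vectors, after which the computation goes through exactly as in the paper.

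Two smaller points. First, in type $\tilde A_{p,q}$ the recipe ``sum of simple roots over reachable vertices'' is not even the right shape of formula at the vertex where the two branches merge: the dimension there is the number of paths, which is $2$ (this is why the paper has $\udim P_0=\delta+\alpha_q$), so the entries of $\udim P_i$ are not always $0$ or $1$. Second, for part (a) the paper economizes: it verifies the identity directly only for $i=n$ and $i=1$, and then obtains the cases $i=n-1$ and $i=0$ by applying $c^{n-2}$ once more and using $c^{2n-4}=\id-2\langle-,\delta\rangle\delta$ together with $\langle\udim P_i,\delta\rangle=1$; your plan of checking all four extending vertices directly is fine, but only once the correct $\udim P_i$ are used.
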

\begin{proof}
(a) We have $\langle \udim P_i, \delta \rangle =\delta_i$, which
equals one as $i$ is extending. By the proof of \ref{delta}, we
have $c^{n-2}(\udim P_n) =c^{n-2}(\alpha_n)= \alpha_{n-1}
-\delta=\udim P_{n-1}-\delta$. If we apply $c^{n-2} $ to this
equation, we obtain $\udim P_n -2\delta =c^{2n-4}(\udim
P_n)=c^{n-2}(\udim P_{n-1})-\delta$ and thus $\udim P_n -\delta
=c^{n-2}(\udim P_{n-1})$.

Furthermore $c^{n-2}(\udim P_1)= c^{n-2}(\sum_{i=1}^n
\alpha_i)=c^{n-2}( e_1+e_{n-1})=-e_1+\delta +e_{n-1}-\delta=
e_2+e_{n-1}+\delta -\theta - \delta = \sum_{i=2}^n \alpha_i
+\alpha_0=\udim P_0-\delta$. Analogously to the first case, we
apply $c^{n-2} $ to the equation and obtain $\udim P_1 -\delta$
equals $c^{n-2}(\udim P_{0})$.

(b) We first assume that $i$ satisfies $ 0< i < q$. By the proof
of \ref{delta}, we have $c^{q-i}(\udim P_i)=c^{q-i}(\sum^q_{l=i}
\alpha_l)=c^{q-i}(e_i-e_{q+1})=e_q-e_{p+i+1}-\delta=\udim
P_{p+i}-\delta$. For $i=q$ we have $\udim P_q= e_q-e_{q+1}=\udim
P_0-\delta$ and for $i=0$ we have $c^q(\udim P_0)=c^q(
e_q-e_{q+1}+\delta)= e_q-\delta -e_{p+1}-\delta+\delta=\udim
P_p-\delta$.

Let $q+1 \le i \le 2q$, then $c^{q-i}(\udim
P_i)=c^{q-i}(e_q-e_{i+1})=e_{i-q}-e_{q+1}-\delta=\udim
P_{i-q}-\delta$.

Let finally $ 2q \le i \le p+q-1$ if $p>q$, then $c^q(\udim
P_i)=c^q (e_q-e_{i+1})=e_q -e_{i-q+1}-\delta= \udim
P_{i-q}-\delta$, which finishes the proof.
\end{proof}

\section{Frieze sequences of cluster variables}
Let $\mathcal{F}$ denote the field $\Q(x_0,\ldots, x_n)$.
A sequence $(a_j)_{j\in\N}$ of elements in $\mathcal{F}$ {\em
satisfies a linear recurrence} if for some integer $s\ge 1$, there
exist elements $\alpha_0,\ldots, \alpha_{s-1}$ in $\mathcal{F}$ such
that for all $j\in \N$, one has $a_{j+s}=\alpha_0a_{j}+\ldots
\alpha_{s-1} a_{j+s-1}$. Equivalently, the generating series
\[
\sum_{j\in \N} a_j \lambda^j
\]
in $\mathcal{F}[[\lambda]]$ is rational and its denominator is a multiple
of the polynomial
$P(\lambda)=\lambda^s - \alpha_{s-1} \lambda^{s-1} - \ldots - \alpha_0$.
We say that the polynomial {\em annihilates} the sequence.
\begin{lemma}\label{ProductSum}
(a) Let $(a_j)_{j\in \N}$ and $(b_j)_{j\in \N}$ be two sequences in
$\mathcal{F}$ that satisfy a linear recurrence relation. Then the
sequences $(a_j+b_j)_{j\in\N}$ and $(a_jb_j)_{j\in\N}$ satisfy a
linear recurrence relation.

(b) Let $m \ge 1$ be an integer and for each $1 \le i \le m$, let
$(a^i_j)_{j\in \N}$ be a sequence in $\mathcal{F}$. We consider the
sequence of vectors $(v_j)_{j\in \N}$ defined by $v_j=(a^1_j,
\ldots, a^m_j)^t$ for all $j\in\N$. Suppose there exist $m\times m$
matrices $A_0, \ldots ,A_{s-1}$ over $\mathcal{F}$ such that for
every $j \in \N$ we have $v_{j+s}=A_0v_{j}+\ldots +A_{s-1}
v_{j+s-1}$. Then each sequence $(a^i_j)_{j\in \N}$ satisfies a
linear recurrence.
\end{lemma}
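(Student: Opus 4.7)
The plan is to treat the two parts independently, both by reducing to finite dimensional linear algebra over $\mathcal{F}$.

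For part (a), I would work with the shift operator $E$ on $\mathcal{F}$-valued sequences. A sequence $(a_j)$ satisfies a linear recurrence if and only if $P(E) a = 0$ for some nonzero $P \in \mathcal{F}[\lambda]$, equivalently the $\mathcal{F}$-span of its forward orbit $\{E^k a : k \geq 0\}$ is finite dimensional. Closure under sums is then immediate: if $P(E) a = 0$ and $Q(E) b = 0$, then $(PQ)(E)(a+b) = 0$. For the product $c_j = a_j b_j$, the key point is that, since $(a_j)$ and $(b_j)$ satisfy recurrences of some orders $s$ and $t$, one may expand $a_{j+k} = \sum_{i<s} \beta_{k,i}\, a_{j+i}$ and $b_{j+k} = \sum_{l<t} \gamma_{k,l}\, b_{j+l}$ with constants $\beta_{k,i}, \gamma_{k,l} \in \mathcal{F}$ independent of $j$. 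Multiplying these expansions places every $E^k c$ in the $\mathcal{F}$-span of the $st$ sequences $(a_{j+i} b_{j+l})_j$, so the $E$-orbit of $c$ has dimension at most $st$; any nontrivial linear dependence among $c, Ec, \ldots, E^{st} c$ then produces the required recurrence.

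For part (b), I would stack the vectors $v_j$ vertically and use a block companion matrix. Set $V_j$ to be the column vector obtained by stacking $v_j, v_{j+1}, \ldots, v_{j+s-1}$, and let $M$ be the $ms \times ms$ matrix over $\mathcal{F}$ with $I_m$ on the block superdiagonal and $A_0, A_1, \ldots, A_{s-1}$ filling the bottom block row. The hypothesis $v_{j+s} = A_0 v_j + \cdots + A_{s-1} v_{j+s-1}$ then translates precisely into $V_{j+1} = M V_j$, so $V_j = M^j V_0$. By the Cayley--Hamilton theorem applied to $M$, there exist $c_0, \ldots, c_{ms-1} \in \mathcal{F}$ with $M^{ms} = \sum_{k=0}^{ms-1} c_k M^k$. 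Applying this identity to $V_j$ and reading off the $i$-th coordinate of the first block gives $a^i_{j+ms} = \sum_{k=0}^{ms-1} c_k\, a^i_{j+k}$, a linear recurrence of order at most $ms$ valid for each $1 \leq i \leq m$.

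There is no genuine obstacle in either part: both reduce to routine finite dimensional linear algebra once the shift operator, respectively the companion matrix, is in place. The only point requiring care, in part (a), is that the constants $\beta_{k,i}$ and $\gamma_{k,l}$ in the expansions of $a_{j+k}$ and $b_{j+k}$ are independent of $j$, which is precisely what keeps the products of these expansions inside a fixed $st$-dimensional subspace and yields the bound on the order of the recurrence.
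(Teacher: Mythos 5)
Your proof is correct and follows essentially the same route as the paper, which simply records (citing Berstel--Reutenauer) that the sum is annihilated by $PQ$, the Hadamard product by the characteristic polynomial of $C_P\otimes_{\mathcal F} C_Q$, and the vector sequence by $\det(\lambda^s - \lambda^{s-1}A_{s-1}-\cdots-A_0)$. Your dimension count on the span of the shifted products $(a_{j+i}b_{j+l})_j$ and your Cayley--Hamilton argument for the block companion matrix are exactly the standard verifications of these two facts (the characteristic polynomial of your block companion matrix is, up to sign, the determinant the paper names), so you have merely supplied the details the paper delegates to its reference.
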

\begin{proof} We refer to \cite{BerstelReutenauer10} for complete proofs
of these fundamental facts. Let us record however, that if the two series
are annihilated by polynomials $P$ and $Q$, then their sum is
annihilated by $PQ$ and their Hadamard product $(a_j b_j)_{j\in\N}$
by the characteristic polynomial of $C_P \otimes_\mathcal{F} C_Q$, where $C_P$ is
the companion matrix of $P$. In b), the sequences are annihilated
by the determinant of the matrix
$\lambda^s - \lambda^{s-1} A_{s-1} - \ldots - \lambda A_1 - A_0$.
\end{proof}

We refer to \cite{Keller08c} for an introduction to the links
between cluster algebras and quiver representations which
we now briefly recall.
Let $\mathcal{D}_Q$ denote the bounded derived category of
$kQ$-modules. It is a triangulated category and
we denote its suspension functor by $\Sigma:\mathcal{D}_Q\to
\mathcal{D}_Q$. As $kQ$ has finite global dimension,
Auslander-Reiten triangles exist in $\mathcal{D}_Q$ by
\cite[1.4]{Happel91a}. We denote
the Auslander-Reiten translation of $\mathcal{D}_Q$ by $\tau$. On
the non projective modules, it coincides with the Auslander-Reiten
translation of $\mod kQ$. The
{\em cluster category} \cite{BuanMarshReinekeReitenTodorov06}
\[ \mathcal{C}_Q=\mathcal{D}_Q/(\tau^{-1} \Sigma)^{\Z}\] is the orbit
category of $\mathcal{D}_Q$ under the action of the cyclic group
generated by $\tau^{-1} \Sigma$. One can show \cite{Keller05} that
$\mathcal{C}_Q$ admits a canonical structure of triangulated
category such that the projection functor $\pi:\mathcal{D}_Q \to
\mathcal{C}_Q$ becomes a triangle functor.

From now on, we assume that the field $k$ has characteristic $0$. We
refer to \cite{CalderoKeller06} for the definition of the
Caldero-Chapoton \cite{CalderoChapoton06} map $L \mapsto X_L$ from the set of
isomorphism classes of objects $L$ of $\mathcal{C}_Q$ to the field
$\mathcal{F}$. We have $X_{\tau P_i}= x_i$ for all vertices $i$ of
$Q$ and $X_{M\oplus N}= X_M X_N$ for all objects $M$ and $N$ of
$\mathcal{C}_Q$.
We call an object $M$ in $\mathcal{C}_Q$ {\em rigid} if it has no
self-extensions, that is if the space $\Ext^1_{\mathcal{C}_Q}(M,M)$ vanishes.

\begin{theo}[\cite{CalderoKeller06}] \label{acyclic-main-thm}
\begin{itemize}
\item[a)] The map $L \mapsto X_L$ induces a bijection from the set
of isomorphism classes of rigid indecomposables of the cluster
category $\mathcal{C}_Q$ onto the set of cluster variables of the
cluster algebra $\mathcal{A}_Q$. \item[b)] If $L$ and $M$ are
indecomposables such that the space $\Ext^1(L,M)$ is
one-dimensional, then we have the generalized exchange relation
\begin{equation}\label{eq:acyclic-main-thm}
X_L X_M = X_E + X_{E'}
\end{equation}
where $E$ and $E'$ are the middle terms of `the' non split triangles
\[
\xymatrix{L \ar[r] & E \ar[r] & M \ar[r] & \Sigma L} \mbox{ and }
\xymatrix{M \ar[r] & E' \ar[r] & L \ar[r] & \Sigma M}.
\]
\end{itemize}
\end{theo}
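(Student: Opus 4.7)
The plan is to prove part (b) first as a general cluster multiplication formula and then to deduce the bijection in part (a) by a mutation induction starting from the initial cluster-tilting object $\bigoplus_i \tau P_i$ of $\mathcal{C}_Q$.

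For (b), I would expand both factors via the Caldero--Chapoton formula
\[
X_L = \sum_e \chi(\mathrm{Gr}_e(L)) \prod_i x_i^{-\langle e,\alpha_i\rangle-\langle \alpha_i, \udim L - e\rangle},
\]
so that $X_L X_M$ becomes a sum over pairs $(e_L,e_M)$ of Euler characteristics of products $\mathrm{Gr}_{e_L}(L) \times \mathrm{Gr}_{e_M}(M)$, while $X_E + X_{E'}$ becomes a sum over dimension vectors $e$ of Euler characteristics of $\mathrm{Gr}_e(E) \sqcup \mathrm{Gr}_e(E')$. The key geometric input is a constructible stratification of $\mathrm{Gr}_e(E)$ and $\mathrm{Gr}_e(E')$ by pairs $(e_L,e_M)$ with $e_L+e_M=e$, obtained by intersecting a subrepresentation with $L$ and projecting to $M$. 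Hypothesis $\dim \Ext^1(L,M)=1$ is used to show that for each such pair, the induced short exact sequence $0 \to L_{\mathrm{sub}} \to ? \to M_{\mathrm{quot}} \to 0$ contributes to $E$ precisely when its class in $\Ext^1(M_{\mathrm{quot}}, L_{\mathrm{sub}})$ is non-zero and to $E'$ otherwise. The monomial in the $x_i$ attached to each stratum matches on both sides thanks to the Euler form identity $\langle \udim L, \udim M \rangle + \langle \udim M, \udim L \rangle$ compared with $\langle \udim E, \udim E \rangle$ (and similarly for $E'$). Summing the stratum contributions yields the identity $X_L X_M = X_E + X_{E'}$.

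For (a), one checks from the definitions that $X_{\tau P_i} = x_i$ and that $X$ is multiplicative on direct sums. Together with (b), this shows that whenever $T = T_1 \oplus \cdots \oplus T_n$ is a cluster-tilting object in $\mathcal{C}_Q$ and $T_k^*$ is the unique other indecomposable completing $T/T_k$ to a cluster-tilting object, the pair $(X_{T_k}, X_{T_k^*})$ satisfies the corresponding cluster exchange relation. Since every cluster of $\mathcal{A}_Q$ is reached from the initial seed by iterated mutation, and every basic cluster-tilting object of $\mathcal{C}_Q$ is reached from $\bigoplus_i \tau P_i$ by iterated mutation (Buan--Marsh--Reineke--Reiten--Todorov), an induction on mutation distance shows that each cluster variable equals $X_L$ for a rigid indecomposable $L$. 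Injectivity follows from a denominator calculation: for $L$ a rigid indecomposable not of the form $\tau P_i$, the reduced denominator of $X_L$ is $\prod_i x_i^{(\udim L)_i}$, while $X_{\tau P_i} = x_i$, and rigid indecomposable modules have pairwise distinct dimension vectors (real Schur roots, by Kac's theorem).

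The main obstacle is the geometric step in (b): one must simultaneously match monomial exponents (a purely homological computation from the Euler form) and identify strata of the Grassmannians of $E$ and $E'$ with the expected pairs of Grassmannians of $L$ and $M$. The hypothesis that $\Ext^1(L,M)$ is one-dimensional is crucial here, since it reduces the projectivization of the extension space to a point and allows the Euler-characteristic-level argument to cleanly split into two cases (the class is zero or not). Without this hypothesis, the general version of the formula involves higher-dimensional projective spaces of extensions and the right-hand side becomes the full Caldero--Keller cluster multiplication formula.
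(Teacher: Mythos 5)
A preliminary remark: the paper does not prove this theorem at all — it is quoted from Caldero--Keller \cite{CalderoKeller06} — so there is no internal proof to compare against; your outline is, in spirit, the strategy of that reference (Caldero--Chapoton expansion, stratification of quiver Grassmannians, mutation induction plus a denominator argument). The question is therefore only whether your sketch would actually go through, and there it has a genuine gap at the heart of part (b). By Lemma~\ref{ext}, $\Ext^1_{\mathcal{C}_Q}(L,M)\cong \Ext^1_{kQ}(L,M)\oplus D\Ext^1_{kQ}(M,L)$, so one-dimensionality forces exactly one of the two module extension spaces to be nonzero; consequently only one of the two triangles is induced by a short exact sequence of $kQ$-modules. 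Your stratification of $\mathrm{Gr}_e(E')$ ``by intersecting a subrepresentation with $L$ and projecting to $M$'' presupposes that $M\to E'\to L\to \Sigma M$ is also a module extension, which it is not: its connecting map corresponds under Serre duality to a nonzero morphism $L\to \tau M$, and $E'$ is assembled from the kernel and cokernel of that morphism (up to $\tau$- and $\Sigma$-twists, possibly involving shifted projectives $\Sigma P_i$, which also need separate treatment in the Caldero--Chapoton formula). Matching the strata and the monomial exponents for this half is precisely the technically hard part of the Caldero--Keller proof, and in your sketch it is asserted rather than supplied. This is not a marginal case here: the generalized exchange triangles of Theorem~\ref{deltaextension} are exactly of this mixed type, with $M$ non-rigid of dimension vector $\delta$.

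There is also a smaller but real gap in part (a). To run the mutation induction you need more than the relation $X_{T_k}X_{T_k^*}=X_E+X_{E'}$: you must know that $E$ and $E'$ decompose into summands of $T/T_k$ with multiplicities given by the exchange matrix, i.e.\ by the quiver of $\End_{\mathcal{C}_Q}(T)$, so that the categorical exchange relation coincides with the Fomin--Zelevinsky one seed by seed; this compatibility of categorical and combinatorial mutation is a substantive ingredient (due to Buan--Marsh--Reiten and Caldero--Keller) that your argument silently assumes. Likewise, the denominator statement you use for injectivity (reduced denominator of $X_L$ equal to $\prod_i x_i^{(\udim L)_i}$ for rigid indecomposable modules $L$) is itself a theorem requiring proof from the Caldero--Chapoton formula, not a formal consequence of the definitions. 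With these ingredients supplied, your route is the standard one; without them, the proposal is an outline of the cited proof rather than a proof.
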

Let $L$ and $M$ be
two indecomposable objects in the cluster category such that
$\Ext^1_{\mathcal{C}_Q} (M,L)$ is one dimensional. If both $L$ and
$M$ are rigid, then so are $E$ and $E'$ and the sequence
(\ref{eq:acyclic-main-thm}) is an exchange relation of the cluster
algebra $\mathcal{A}_Q$ associated with $Q$. Therefore in this case, we call the
triangles in (\ref{acyclic-main-thm}) {\em exchange triangles}. If $L$
or $M$ is not rigid, we call them {\em generalized exchange
triangles}.
\begin{cor}
For each vertex $i$ of $Q_0$ and each $j$ in $\N$, we have
$X^i_j=X_{\tau^{-j+1} P_i}$.
\end{cor}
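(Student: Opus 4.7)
The plan is induction on $(j,i)$ in the total order of the paper. The case $j=0$ is built in: $X^i_0=x_i=X_{\tau P_i}$.

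At step $(j+1,i)$, set $L=\tau^{-j+1}P_i$ and $M=\tau^{-j}P_i$. The admissibility of the vertex ordering ensures that every arrow $s\to i$ has $s>i$, so the pairs $(j+1,s)$ appearing in the recursion precede $(j+1,i)$ and the inductive hypothesis applies. Substituting $X^i_j=X_L$, $X^s_{j+1}=X_{\tau^{-j}P_s}$ and $X^d_j=X_{\tau^{-j+1}P_d}$ in the defining relation for $X^i_{j+1}$, the desired equality $X^i_{j+1}=X_M$ is equivalent, after dividing by the non-zero element $X_L$, to
\[
X_L\,X_M \;=\; 1 \;+\; \Bigl(\prod_{s\to i}X_{\tau^{-j}P_s}\Bigr)\Bigl(\prod_{i\to d}X_{\tau^{-j+1}P_d}\Bigr).
\]

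I would derive this from Theorem~\ref{acyclic-main-thm}(b) applied to $(L,M)$. Since $\tau=\Sigma$ in $\mathcal{C}_Q$, we have $L=\Sigma M$, whence $\Ext^1_{\mathcal{C}_Q}(L,M)\cong\End_{\mathcal{C}_Q}(L)$ is one-dimensional. The companion non-split triangle $M\to E'\to L\xrightarrow{\mathrm{id}}\Sigma M=L$ has an isomorphism as connecting map, so $E'=0$ and $X_{E'}=1$. The Auslander-Reiten triangle ending at $M$ is the image under the triangle functor $\tau^{-j}$ of the AR triangle ending at $P_i$ in $\mathcal{C}_Q$, whose middle term I claim equals
\[
E_{P_i}\;=\;\bigoplus_{s\to i}P_s\;\oplus\;\bigoplus_{i\to d}\tau P_d.
\]
Granting this, multiplicativity of the Caldero-Chapoton map on direct sums turns $X_E=X_{\tau^{-j}E_{P_i}}$ into the big product above, and the exchange relation $X_LX_M=X_E+X_{E'}$ supplies the desired identity.

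The single delicate ingredient is this description of $E_{P_i}$. The module-category summands $P_s$ for arrows $s\to i$ correspond to the standard irreducible maps $P_s\to P_i$ in $\mod kQ$. The extra summands $\tau P_d$ indexed by arrows $i\to d$ reflect new irreducible maps in $\mathcal{C}_Q$ that do not exist in $\mathcal{D}_Q$; they can be produced from the orbit-category formula $\Hom_{\mathcal{C}_Q}(X,Y)=\bigoplus_n\Hom_{\mathcal{D}_Q}(X,F^n Y)$ with $F=\tau^{-1}\Sigma$, whose $n=1$ summand yields a one-dimensional space $\Hom_{\mathcal{C}_Q}(\tau P_d,P_i)$ for each arrow $i\to d$. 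This AR computation is the main obstacle; the remainder is a routine unpacking of the recursion and the exchange formula.
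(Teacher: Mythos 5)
Your argument is correct and follows essentially the same route as the paper: induction over the same total order, applying Theorem~\ref{acyclic-main-thm}~b) to the pair $(\tau^{-j+1}P_i,\ \tau^{-j}P_i)$, where one non-split triangle is the Auslander--Reiten triangle ending at $\tau^{-j}P_i$ (whose middle term is exactly the one the paper uses, up to an index typo in its displayed triangle) and the companion triangle has zero middle term, producing the summand $1$. The only slip is cosmetic: the irreducible maps $\tau P_d\to P_i$ for arrows $i\to d$ already exist in $\mathcal{D}_Q$, since there $\tau P_d\cong\Sigma^{-1}I_d$; they are merely not morphisms of $kQ$-modules, so the orbit-category summand with $n=1$ is not needed to produce them.
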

\begin{proof}By the definition, the initial variables $x_0, \ldots, x_n$ are
the images under the Caldero-Chapoton map of $\tau P_0, \ldots , \tau
P_n$. The Auslander-Reiten component of $\mathcal{D}_Q$ containing
the projective indecomposable modules is isomorphic to $\Z Q$, where
the vertex $(j, i)$ of $\Z Q$ corresponds to the isomorphism class
of $\tau^{-j+1} P_i$ for all vertices $i$ of $Q$ and $j \in \Z$. To
prove the statement, we use induction on the ordered set $\N\times
\Q_0$. The claim holds for all vertices of $Q$ and $j=0$. Now let
$(j,w)$ be a vertex of $\N \times \Q_0$ such that $j >0$. By the
induction hypothesis, we have $X_{\tau^{-j+2} P_i}= X^i_{j-1}$ for
all vertices $i$ of the quiver and $X_{\tau^{-j+1} P_i}= X^i_j$ for
all $i> w$. We consider the Auslander-Reiten triangle ending in
$\tau^{-j+1} P_w$

\[\tau^{-j+2} P_w \to (\bigoplus_{s\to w } \tau^{-j+2}P_s) \oplus  (\bigoplus_{w \to d} \tau^{-j+1}P_d)  \to
\tau^{-j+1} P_w \to \Sigma \tau^{-j+2} P_w. \]The three terms of
this triangle are rigid and the space of extensions of $\tau^{-1}
P_w$ by $P_w$ is one-dimensional. By \ref{acyclic-main-thm} part b),
this yields the exchange relation
$$X_{\tau^{-j+2} P_w} X_{\tau^{-j+1}P_w} = 1+ \prod_{w\to s}
X_{\tau^{-j+2}P_s}\prod_{d \to w} X_{\tau^{-j+1}P_d}.$$ By the
induction hypothesis, this translates into the relation
$X^w_{j-1}X_{\tau^{-j+1} P_w}=1+\prod_{w\to s } X^s_{j-1}\prod_{d
\to w} X^d_j$. Therefore $X_{\tau^{-j+1}P_w}$ equals $X^w_{j}$,
which proves the statement.
\end{proof}

\section{Generalized exchange triangles in the cluster category}
Let $Q$ be an affine quiver. In this section, we
construct some generalized exchange triangles in the cluster category
$\mathcal{C}_Q$.
\begin{lemma}\label{existence}
Let $L$ and $N$ be two indecomposable preprojective $kQ$-modules of
defect minus one satisfying the equation $\udim L =\udim N +
\delta$. Then, for every regular simple $kQ$-module $M$ of dimension
vector $\delta$, there exists an exact sequence
\[ 0\to N \to L \to M \to 0 \]
and $\dim_k \Ext_{kQ}^1( M,N)=1$.
\end{lemma}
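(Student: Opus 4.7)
My plan is to compute $\dim_k \Ext^1_{kQ}(M,N)$ directly from the Euler form, and then to construct the required short exact sequence by exhibiting a surjection $L \twoheadrightarrow M$ and identifying its kernel with $N$. The essential inputs, beyond the Euler form, are the standard Hom-vanishing between Auslander-Reiten components: there are no nonzero morphisms from a regular indecomposable to a preprojective one, nor from a preinjective to a preprojective.

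Concretely, $\Hom(M,N) = 0$ since $M$ is regular and $N$ is preprojective, and $\Ext^1(L,M) \cong D\Hom(M,\tau L) = 0$ by the Auslander-Reiten formula, because $\tau L$ is preprojective or zero. Combined with $\udim M = \delta$, $\udim L = \udim N + \delta$ and the defect hypotheses $\partial(L) = \partial(N) = -1$ (writing $\partial(X) = \langle \delta, \udim X\rangle$), the Euler form yields
\[
\dim \Ext^1(M,N) = -\langle \udim M, \udim N\rangle = -\langle \delta, \udim N\rangle = 1
\]
and
\[
\dim \Hom(L,M) = \langle \udim L, \udim M\rangle = \langle \udim L, \delta\rangle = -\langle \delta, \udim L\rangle = 1,
\]
the last equality using that $\delta$ lies in the radical of the symmetrized Euler form. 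Any nonzero $f\colon L\to M$ then has image a nonzero submodule of the simple regular $M$, hence $f$ is surjective, and setting $K := \ker f$ produces a short exact sequence $0 \to K \to L \to M \to 0$ with $\udim K = \udim L - \delta = \udim N$.

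The remaining task, which I expect to be the main subtlety, is to show $K \cong N$. Any indecomposable regular or preinjective direct summand of $K$ would, through the split inclusion $K \hookrightarrow L$, produce a nonzero morphism from a regular or preinjective module into the preprojective module $L$, which is impossible. Hence every indecomposable summand of $K$ is preprojective. Additivity of the defect on short exact sequences gives $\partial(K) = \partial(L) - \partial(M) = -1$, and since each indecomposable preprojective has defect $\le -1$, only one summand is possible. Thus $K$ is indecomposable preprojective, and since indecomposable preprojectives over an acyclic quiver are determined up to isomorphism by their dimension vector, we conclude $K \cong N$. The main pitfall is simply keeping the sign conventions straight between $\langle -,-\rangle$, $(-,-)$ and the defect so that both Euler-form counts come out to exactly one.
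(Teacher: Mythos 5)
Your route is essentially the paper's: you compute $\dim\Ext^1_{kQ}(M,N)=1$ and $\dim\Hom_{kQ}(L,M)=1$ from the Euler form plus the standard Hom-vanishing between Auslander--Reiten components, and you identify the kernel of a nonzero map $L\to M$ with $N$ via preprojectivity of its summands, additivity of the defect, and the fact that an indecomposable preprojective is determined by its dimension vector. Those parts are correct. The genuine gap is the sentence asserting that a nonzero $f\colon L\to M$ ``has image a nonzero submodule of the simple regular $M$, hence is surjective''. A regular simple module is simple only in the abelian subcategory of regular modules; as a $kQ$-module, $M$ has dimension vector $\delta$ and possesses proper nonzero submodules (all of them preprojective) --- already for the Kronecker quiver a regular simple of dimension vector $(1,1)$ contains a simple module. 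So ``nonzero submodule of $M$'' does not give ``equal to $M$'', and surjectivity is exactly the point the paper argues via defects: if $f$ is nonzero and not surjective, its image is a proper nonzero submodule of $M$, hence preprojective of defect $\le -1$; but the image is $L/\Ker f$, and $\Ker f$ is a preprojective submodule of $L$, so if $\Ker f\ne 0$ the image has defect $\ge 0$, a contradiction; the case $\Ker f=0$ is impossible since $\udim L=\udim N+\delta$ cannot embed into a module of dimension vector $\delta$.

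Once that argument is inserted, your proof is complete and coincides in substance with the paper's (the paper deduces preprojectivity of the kernel from the fact that all submodules of a preprojective module are preprojective, rather than your summand-by-summand Hom-vanishing argument, but both are standard). One wording slip to correct: the inclusion $\Ker f\hookrightarrow L$ is not split; your argument only needs that an indecomposable direct summand of $\Ker f$ maps injectively, hence nonzero, into $L$, which is what you actually use.
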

\begin{proof}
As $N$ has defect minus one, we have
\begin{align*}
-1 &= \langle \delta , \udim N \rangle= \dim \Hom(M,N)-\dim \Ext_{kQ}^1(M, N) \\
    & = - \dim \Ext_{kQ}^1(M, N) .
\end{align*}
By the assumption, we have $\udim L= \udim N
+\delta$ and therefore $1= \langle\udim L, \delta \rangle=\dim \Hom
( L, M)$ as $\Ext_{kQ}^1(L,M)$ vanishes.  Since $M$ is regular
simple, every submodule of $M$ that is not equal to $M$ is
preprojective. Every submodule of $L$ is preprojective hence of
defect at most $-1$. Thus, every quotient of $L$ is of defect $\ge
0$. Since the proper submodules of $M$ are preprojective, every non
zero map from $L$ to $M$ is surjective. The kernel of such a map has
defect $-1$ and is preprojective. Therefore the kernel is
indecomposable and its dimension vector equals $\udim N$. Any
preprojective indecomposable module is determined by its dimension
vector. Thus, the kernel of every non zero map is isomorphic to $N$.
This proves the existence of the exact sequence.
\end{proof}

\begin{lemma}\label{ext}
Let $N$ and $M $ be two $kQ$-modules. Then we have a canonical
isomorphism $\Ext^1_{\mathcal{C}_Q} (M,N)\cong \Ext_{kQ}^1(M, N)
\oplus D \Ext_{kQ}^1(N,M)$.
\end{lemma}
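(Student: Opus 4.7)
The plan is to use the standard formula for morphism spaces in the orbit category $\mathcal{C}_Q = \mathcal{D}_Q/(\tau^{-1}\Sigma)^{\Z}$. Starting from $\Ext^1_{\mathcal{C}_Q}(M, N) = \Hom_{\mathcal{C}_Q}(M, \Sigma N)$, the Hom-formula for orbit categories gives
\[
\Ext^1_{\mathcal{C}_Q}(M, N) = \bigoplus_{n \in \Z} \Hom_{\mathcal{D}_Q}(M, (\tau^{-1}\Sigma)^n \Sigma N) = \bigoplus_{n \in \Z} \Hom_{\mathcal{D}_Q}(M, \tau^{-n}\Sigma^{n+1} N).
\]
The two summands on the right-hand side of the lemma will then come from two specific values of $n$: for $n=0$ the term is $\Hom_{\mathcal{D}_Q}(M, \Sigma N) = \Ext^1_{kQ}(M, N)$ by the embedding $\mod kQ \hookrightarrow \mathcal{D}_Q$; for $n=-1$ the term is $\Hom_{\mathcal{D}_Q}(M, \tau N)$, which by the Auslander--Reiten formula in the derived category (equivalently, Serre duality on $\mathcal{D}_Q$ with Serre functor $\Sigma \tau$) is canonically identified with $D\Ext^1_{kQ}(N, M)$.

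The substantive task is then to show that every other summand vanishes. Here the key input is that $kQ$ is hereditary, so that every object of $\mathcal{D}_Q$ is isomorphic to a direct sum of shifts of modules, and $\Hom_{\mathcal{D}_Q}(X, Y[k])$ for two modules $X$ and $Y$ vanishes unless $k \in \{0, 1\}$. Using that for hereditary $kQ$ the Auslander--Reiten translate agrees with the Nakayama (Serre) functor up to a shift, and that the Nakayama functor has cohomological amplitude at most one, I would compute the cohomological position of $\tau^{-n}\Sigma^{n+1} N$ as a shifted module and verify that the resulting shift falls outside the admissible range $\{0, 1\}$ whenever $n \notin \{0, -1\}$.

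The main obstacle is exactly this cohomological bookkeeping, because iterating $\tau^{\pm 1}$ on a module may leave $\mod kQ$ (for instance $\tau^{-1} I_j = \Sigma P_j$), so the position of $\tau^{-n}\Sigma^{n+1}N$ depends on where $N$ lies in the Auslander--Reiten quiver and on how many times $\tau^{\pm n}$ crosses an injective or projective boundary. A tidier route is to apply Serre duality to pair the summand for $n$ with the summand for $-n-1$, reducing the vanishing statement to a single uniform amplitude estimate and recovering the $2$-Calabi--Yau duality $\Ext^1_{\mathcal{C}_Q}(M, N) \cong D\Ext^1_{\mathcal{C}_Q}(N, M)$ as a byproduct. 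Once the vanishing is established, combining the $n=0$ and $n=-1$ summands yields the claimed natural isomorphism $\Ext^1_{\mathcal{C}_Q}(M, N) \cong \Ext^1_{kQ}(M, N) \oplus D\Ext^1_{kQ}(N, M)$.
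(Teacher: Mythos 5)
Your proposal is mathematically sound, but it is worth knowing that the paper does not prove this lemma at all: it simply cites Proposition~1.7~c) of \cite{BuanMarshReinekeReitenTodorov06}, and what you have sketched is essentially the standard proof of that cited result. Concretely, your decomposition $\Ext^1_{\mathcal{C}_Q}(M,N)=\bigoplus_{n\in\Z}\Hom_{\mathcal{D}_Q}(M,F^n\Sigma N)$ with $F=\tau^{-1}\Sigma$, the identification of the $n=0$ summand with $\Ext^1_{kQ}(M,N)$ and of the $n=-1$ summand $\Hom_{\mathcal{D}_Q}(M,\tau N)$ with $D\Ext^1_{kQ}(N,M)$ via the Auslander--Reiten (Serre duality) formula, is exactly right. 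The vanishing step you flag as the ``main obstacle'' is in fact cleaner than you fear and does not require tracking where $N$ sits in the Auslander--Reiten quiver: writing $F^n\Sigma N=\nu^{-n}\Sigma^{2n+1}N$ with $\nu=\Sigma\tau$ the Serre (Nakayama) functor, each application of $\nu^{\pm1}$ moves cohomological support by at most one degree (since $kQ$ is hereditary, $\nu$ has amplitude $[-1,0]$ and $\nu^{-1}$ has amplitude $[0,1]$), so for $n\ge 1$ the cohomology of $F^n\Sigma N$ lies in degrees at most $-n-1\le -2$, and for $n\le -2$ in degrees at least $-n-1\ge 1$; since every object of $\mathcal{D}_Q$ splits as a sum of shifted modules and $\Hom_{\mathcal{D}_Q}(M,\Sigma^j W)=\Ext^j_{kQ}(M,W)=0$ unless $j\in\{0,1\}$, all these summands vanish. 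Your alternative idea of pairing the summand for $n$ with the one for $-n-1$ via Serre duality is also correct (a short computation with $F$, $\tau$, $\Sigma$ shows the $n$-summand of $\Ext^1_{\mathcal{C}_Q}(M,N)$ is dual to the $(-n-1)$-summand of $\Ext^1_{\mathcal{C}_Q}(N,M)$), which halves the verification and exhibits the $2$-Calabi--Yau property. In short: your route is correct and complete once the amplitude estimate above is written out; the only difference from the paper is that the paper outsources the whole statement to the reference.
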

\begin{proof} This is Proposition 1.7 c) of
\cite{BuanMarshReinekeReitenTodorov06}.
%We have $\Ext^1_{\mathcal{C}_Q} (M,N)= \bigoplus_{z \in \Z} \Hom_{\mathcal{D}_Q} (M, \Sigma^{1-2z} S^z N)$.
%As $kQ$ has global dimension one, we have $S^{z}N \in D^{\ge
%-z}(kQ) $ for $z \ge0$ . Therefore $\Sigma^{1-2z} S^z N $ lies in
%$D^{\ge z-1}(kQ)$ and $\Hom_{\mathcal{D}_Q} (M, \Sigma^{1-2z} S^z
%N)$ vanishes for $z\ge 2$. Analogously $S^z N \in D^{ \le -z}
%(kQ)$ holds for $z \le 0$ and therefore $\Sigma^{1-2z} S^z N $ are
%in $ D^{ \le 1-z}(kQ)$. As $kQ$ has global dimension one, the
%$\Hom_{\mathcal{D}_Q} (M, \Sigma^{1-2z} S^z N)$ vanish for $z \le
%-1$. It follows that $\Ext^1_{\mathcal{C}_Q} (M,N)\cong
%\Hom_{\mathcal{D}_Q} (M, \Sigma N)\oplus \Hom_{\mathcal{D}_Q} (M,
%\Sigma^{-1} S N) \cong \Ext^1_{kQ}(M,N) \oplus D
%\Hom_{\mathcal{D}_Q} (\Sigma^{-1} N,M)$, which proves the statement.
\end{proof}

\begin{theo}\label{deltaextension}
Let $i\in Q_0$ be an extending vertex and suppose there is a
positive integer $b$ such that $P_i$ satisfies the equation $\udim
\tau^{-b} P_i= \udim P_i + \delta$. Then, for every regular simple
$kQ$-module $M$ of dimension vector $\delta$, there exist
generalized exchange triangles in $\mathcal{C}_{Q}$
\[
P_i \to \tau^{-b} P_i \to M \to \Sigma P_i  \quad\mbox{and}\quad
M \to \tau^{b} P_i \to P_i \to \Sigma M .
\]
\end{theo}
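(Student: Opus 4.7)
The plan is to construct the first triangle from a short exact sequence of $kQ$-modules via Lemma~\ref{existence}, promote it to a generalized exchange triangle in $\mathcal{C}_Q$ using Lemma~\ref{ext} together with Theorem~\ref{acyclic-main-thm}(b), and then obtain the second triangle by applying the auto-equivalence $\tau^b$ and rotating.

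To apply Lemma~\ref{existence} with $N=P_i$ and $L=\tau^{-b}P_i$, I would first check that both modules have defect $-1$. Since $i$ is extending we have $\delta_i=1$, so $\langle \udim P_i,\delta\rangle=\delta_i=1$ and, by the identity $\langle x,y\rangle=-\langle y,cx\rangle$ applied with $c\delta=\delta$, this gives $\langle\delta,\udim P_i\rangle=-1$. Combined with $\langle\delta,\delta\rangle=0$ (which follows from $(\delta,\delta)=0$), the hypothesis $\udim\tau^{-b}P_i=\udim P_i+\delta$ yields defect $-1$ for $\tau^{-b}P_i$ as well. The hypothesis also implicitly ensures that $\tau^{-b}P_i$ is an honest preprojective $kQ$-module, since $\udim P_i+\delta$ being a positive dimension vector forces no intermediate $\tau^{-j}P_i$ for $0\le j<b$ to be injective. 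Lemma~\ref{existence} then produces a short exact sequence
\[
0\to P_i\to \tau^{-b}P_i\to M\to 0
\]
with $\dim_k\Ext^1_{kQ}(M,P_i)=1$.

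Viewed in $\mathcal{C}_Q$, this sequence yields a non-split triangle $P_i\to \tau^{-b}P_i\to M\to \Sigma P_i$. By Lemma~\ref{ext} and the vanishing $\Ext^1_{kQ}(P_i,M)=0$ (projectivity of $P_i$), the group $\Ext^1_{\mathcal{C}_Q}(M,P_i)$ is also one-dimensional, so Theorem~\ref{acyclic-main-thm}(b) applies and identifies the triangle above as the first generalized exchange triangle, with middle term $E=\tau^{-b}P_i$.

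For the second triangle, I would apply the auto-equivalence $\tau^b$ of $\mathcal{C}_Q$ to the first triangle and use $\tau M=M$ (since $M$, being a regular simple of dimension vector $\delta$, lies in a homogeneous tube of rank one) to obtain the non-split triangle $\tau^b P_i\to P_i\to M\to \Sigma\tau^b P_i$. A single backward rotation, together with the identity $\Sigma^{-1}M=\tau^{-1}M=M$ in $\mathcal{C}_Q$, then produces the non-split triangle $M\to \tau^b P_i\to P_i\to \Sigma M$. By the uniqueness of non-split triangles in a one-dimensional extension space, this must be the second generalized exchange triangle furnished by Theorem~\ref{acyclic-main-thm}(b), so $E'=\tau^b P_i$. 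The main care needed is in tracking the coincidence of $\tau$ and $\Sigma$ in $\mathcal{C}_Q$ and executing the rotation correctly; the remaining ingredients are direct applications of the cited lemmas.
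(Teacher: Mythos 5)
Your proposal is correct and follows essentially the same route as the paper: the defect computation makes Lemma~\ref{existence} applicable to $N=P_i$, $L=\tau^{-b}P_i$, Lemma~\ref{ext} (with $\Ext^1_{kQ}(P_i,M)=0$) gives the one-dimensionality of $\Ext^1_{\mathcal{C}_Q}(M,P_i)$, and the second triangle is obtained by applying $\tau^b$ and rotating, using $\Sigma^{-1}M\cong\tau^{-1}M\cong M$ in $\mathcal{C}_Q$. The only cosmetic difference is the order of the rotation and the application of $\tau^b$, which is immaterial.
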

\begin{proof} The defect of $P_i$ is $\langle \delta, \udim P_i\rangle =-\delta_i$, which equals $-1$ since $i$ is an extending vertex. Therefore, the defect of $\tau^{-b} P_i$ also equals
$-1$ and the existence of the first  triangle follows
from \ref{existence}. If we rotate the first triangle, we obtain a
 triangle $ \Sigma^{-1} M \to P_i \to \tau^{-b} P_i \to
M$. If we apply $\tau^{b}$ to it and use the fact that $\Sigma^{-1}
M\cong \tau^{-1} M \cong M$ in $\mathcal{C}_Q$, we get the second
 triangle. By \ref{existence} and \ref{ext} the vector
space $\Ext_{\mathcal{C}_Q}^1(M, P_i)$ is one-dimensional.
\end{proof}

Note that no indecomposable module with dimension vector $\delta$ is
rigid.
\begin{lemma}\cite[3.14]{Dupont08} \label{lemma:x-delta}
Let $N$ and $M$ be two regular simple $kQ$-modules whose dimension
vectors equal $\delta$. Then $X_M$ equals $X_N$.
\end{lemma}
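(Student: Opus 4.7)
The plan is to realise every regular simple module of dimension vector $\delta$ as a fibre of a single flat family of $kQ$-modules parametrised by a connected variety, and then to deduce constancy of the Caldero--Chapoton value from constancy of Euler characteristics in such a family.

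First, I would observe that a regular simple of dimension vector $\delta$ must lie in a \emph{homogeneous} tube: in any inhomogeneous tube of width $s>1$ the $s$ regular simples have dimension vectors whose sum is $\delta$, so each of them has some component strictly smaller than $\delta_i$. The homogeneous tubes of $\mod kQ$ are classically parametrised by a Zariski-open subset $U$ of $\mathbb{P}^1_k$, and this parametrisation can be realised by a flat family $\mathcal{M}\to U$ of $kQ$-modules whose fibre over $\lambda$ is a regular simple $M_\lambda$ with $\udim M_\lambda = \delta$; in particular, both $M$ and $N$ are fibres of this family, and $U$ is irreducible.

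Second, I would apply the Caldero--Chapoton formula
\[
X_M \;=\; \sum_{e\in\N^{n+1}} \chi(\operatorname{Gr}_e(M))\, \prod_{i\in Q_0} x_i^{-\langle e,\alpha_i\rangle - \langle \alpha_i,\, \delta - e\rangle},
\]
and note that the monomial exponents depend only on $\delta = \udim M$. Hence the statement reduces to showing that, for every $e \in \N^{n+1}$, the Euler characteristic $\chi(\operatorname{Gr}_e(M_\lambda))$ is independent of $\lambda \in U$. This would then follow from the standard fact that, over an irreducible complex base, the fibrewise Euler characteristic of a proper morphism is constant on each stratum of a suitable constructible stratification; combined with the connectedness of $U$ and the uniformity of the construction, this forces $\chi(\operatorname{Gr}_e(M_\lambda))$, and hence $X_{M_\lambda}$, to be constant on $U$. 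In particular $X_M = X_N$.

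The main technical obstacle is verifying that the relative submodule Grassmannian $\operatorname{Gr}_e(\mathcal{M}) \to U$ is sufficiently well-behaved to apply the constancy-of-Euler-characteristic statement uniformly on all of $U$, rather than only on a dense open subset. The most robust way I see to remove this obstacle is to use the explicit matrix description of the regular simples in each homogeneous tube (which depends polynomially on $\lambda$): this exhibits $\operatorname{Gr}_e(\mathcal{M}) \to U$ as a morphism admitting a finite stratification into pieces that are Zariski-locally trivial fibrations over $U$, which is enough to conclude fibrewise constancy of $\chi$ and hence $X_M = X_N$.
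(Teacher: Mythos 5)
The paper itself offers no argument for this lemma --- it simply cites \cite[3.14]{Dupont08} --- so your proposal has to be judged on its own terms. Its skeleton is sound: a regular simple of dimension vector $\delta$ does lie in a homogeneous tube, the exponents in the Caldero--Chapoton expansion depend only on $e$ and on $\udim M=\delta$, and so everything reduces to showing that $\chi(\mathrm{Gr}_e(M_\lambda))$ is the same for \emph{every} homogeneous quasi-simple $M_\lambda$. Precisely there, however, is a genuine gap. The ``standard fact'' you invoke only gives constancy of the fibrewise Euler characteristic on a dense open subset of the irreducible base: for a proper flat family, $\chi$ of the fibres can and typically does jump on proper closed subsets (a pencil of plane conics degenerating to a line pair has $\chi=2$ generically and $\chi=3$ at the special fibre, even though its equations depend polynomially on the parameter). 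So ``connectedness of $U$ plus uniformity of the construction'' says nothing about finitely many special values of $\lambda$, which is exactly where $M$ or $N$ might sit. Your proposed repair --- that the polynomial dependence of the defining matrices on $\lambda$ exhibits $\mathrm{Gr}_e(\mathcal{M})\to U$ as stratified into pieces that are Zariski-locally trivial over all of $U$ --- is asserted, not proved, and it is where the whole content of the lemma lies: any algebraic family is given by matrices depending polynomially on the base point, the modules $M_\lambda$ are pairwise non-isomorphic, and no group acts along the family, so triviality over all of $U$ (rather than over a dense open subset) is precisely what has to be established, e.g.\ by an actual analysis of the submodules of a homogeneous quasi-simple. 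The existence of the flat family $\mathcal{M}\to U$ itself is also used without construction or reference, though that is a minor point.

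If you want a self-contained argument, a much shorter route is available from results already in the paper and needs no geometry. For an extending vertex $i$ and any regular simple $M$ of dimension vector $\delta$, Lemmas \ref{existence} and \ref{ext} show that $\Ext^1_{\mathcal{C}_Q}(M,P_i)$ is one-dimensional, and the two non-split triangles have middle terms of the form $\tau^{t}P_j$, determined by dimension vectors alone and hence independent of the choice of $M$; this is exactly what is done in Theorem \ref{deltaextension}, Theorem \ref{Dn}~a) and Theorem \ref{vectorsequence}~a), whose proofs nowhere use the present lemma. Theorem \ref{acyclic-main-thm}~b) then gives $X_M X_{P_i}=X_E+X_{E'}$ with a right-hand side not depending on $M$, and since $X_{P_i}$ is a nonzero element of $\mathcal{F}$ this forces $X_M=X_N$ for any two regular simples $M$, $N$ of dimension vector $\delta$.
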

We set $X_{\delta}=X_M$ for any regular simple module $M$ with
dimension vector $\delta$. By the previous Lemma, $X_{\delta}$ does
not depend on the choice of $M$.

\begin{theo}\label{extending}
Let $i\in Q_0$ be an extending vertex and suppose that there is a
positive integer $b$ such that $P_i$ satisfies the equation $\udim
\tau^{-b} P_i= \udim P_i + \delta$. Then the frieze sequence $(X^i_j)_{j
\in \Z}$ satisfies the linear recurrence relation $ X_{\delta}
X^i_j= X^i_{j-b} + X^i_{j+b}$ for all $ j \in \Z$.
\end{theo}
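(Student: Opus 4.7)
The plan is to apply the Caldero--Chapoton map to the $\tau$-shifts of the two generalized exchange triangles produced by Theorem~\ref{deltaextension}, and then read off the desired recurrence from the generalized exchange formula of Theorem~\ref{acyclic-main-thm}(b). The whole argument is formal once the ingredients from the preceding sections are in place.

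First, I would choose $M$ to be a regular simple $kQ$-module lying in a \emph{homogeneous} tube. Such an $M$ exists; as recorded in the proof of Theorem~\ref{delta}, it has dimension vector $\delta$, and the tube having width $1$ means $\tau M \cong M$ in $\mod kQ$, hence also in $\mathcal{C}_Q$. Consequently $X_{\tau^{k}M} = X_M = X_\delta$ for every $k\in\Z$ by Lemma~\ref{lemma:x-delta}. Now fix $j\in\Z$ and apply the triangulated autoequivalence $\tau^{-j+1}$ of $\mathcal{C}_Q$ to the two triangles of Theorem~\ref{deltaextension}. Using $\tau^{-j+1}M\cong M$, we obtain the two non-split triangles
\[
\tau^{-j+1}P_i \to \tau^{-j+1-b}P_i \to M \to \Sigma \tau^{-j+1}P_i,
\]
\[
M \to \tau^{-j+1+b}P_i \to \tau^{-j+1}P_i \to \Sigma M,
\]
and $\dim\Ext^1_{\mathcal{C}_Q}(M,\tau^{-j+1}P_i)=1$, since $\tau$ preserves extension spaces.

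Theorem~\ref{acyclic-main-thm}(b) applied to this pair of triangles yields
\[
X_{\tau^{-j+1}P_i}\cdot X_M = X_{\tau^{-j+1-b}P_i} + X_{\tau^{-j+1+b}P_i}.
\]
Invoking the Corollary above to identify $X^i_k = X_{\tau^{-k+1}P_i}$ (the identification extending from $k\in\N$ to all $k\in\Z$ via the autoequivalence $\tau$ of $\mathcal{C}_Q$) and substituting $X_M=X_\delta$, this becomes the desired relation $X_\delta X^i_j = X^i_{j+b} + X^i_{j-b}$.

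The only point of care is to verify that Theorem~\ref{acyclic-main-thm}(b) still applies after the $\tau^{-j+1}$-twist, i.e.\ that the twisted triangles remain non-split with one-dimensional extension space; but this is automatic, since $\tau$ is a triangulated autoequivalence. No genuine obstacle arises; the content of the statement is entirely concentrated in Theorem~\ref{deltaextension} and the identification of frieze sequences with $\tau$-orbits.
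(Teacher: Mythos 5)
Your proposal is correct and follows essentially the same route as the paper: apply powers of $\tau$ to the generalized exchange triangles of Theorem~\ref{deltaextension}, use $\tau M\cong M$ (automatic since any regular simple of dimension vector $\delta$ lies in a homogeneous tube) together with Lemma~\ref{lemma:x-delta}, and conclude via the multiplication formula of Theorem~\ref{acyclic-main-thm}(b) and the identification $X^i_j=X_{\tau^{-j+1}P_i}$. The only difference is your more careful bookkeeping of the $\tau$-shift and the explicit remark that the one-dimensionality of the extension space is preserved under the autoequivalence, which the paper leaves implicit.
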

\begin{proof}
Applying $\tau^{-j}$ to the generalized exchange triangles of
\ref{deltaextension} gives new generalized exchange triangles of the
form
\[  \tau^{-j} P_i \to \tau^{-j-b} P_i \to M \to \tau^{-j} \Sigma P_i
\quad\mbox{and}\quad
M \to \tau^{b-j} P_i \to \tau^{-j} P_i \to  \Sigma M
\]
since $\tau M$ is isomorphic to $M$. These generalized exchange triangles
yield the linear recurrence relation $ X_{\delta}
X^i_j=X^i_{j-b}+X^i_{j+b} $ for all $j\ge b$ by
\ref{acyclic-main-thm} b).
\end{proof}

\section{Type $\tilde D$}
Let $Q$ be of type $\tilde D_n$.
We use the same orientation and labeling of $\tilde D_n$ as in the
proof of \ref{delta}.
\begin{theo} Let $n$ be even and let $i$ be an extending vertex of $Q$.
Then the frieze sequence $(X^i_j)_{j \in \Z}$ satisfies the linear
recurrence relation $ X_{\delta} X^i_j= X^i_{j-n+2} + X^i_{j+n-2}$
for all $ j \ge n-2$.
\end{theo}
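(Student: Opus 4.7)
The plan is to derive the theorem as a direct consequence of Theorem~\ref{extending} applied with $b = n-2$. Once the hypothesis $\udim \tau^{-(n-2)} P_i = \udim P_i + \delta$ is verified for each extending vertex $i$, that theorem immediately produces the asserted recurrence $X_\delta X^i_j = X^i_{j-(n-2)} + X^i_{j+(n-2)}$ for all $j \geq n-2$. So the entire task reduces to verifying this single dimension vector identity.

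First I would rewrite Theorem~\ref{delta}(2) in the case $\tilde D_n$ with $n$ even in the form $c^{n-2}(x) = x - \langle x, \delta\rangle \delta$ (since $m=1$). Because $\delta$ generates the radical of the symmetrized Euler form, $\langle \delta, \delta\rangle = 0$, so $c^{n-2}$ fixes $\delta$; a two-line computation then inverts the formula to $c^{-(n-2)}(x) = x + \langle x, \delta\rangle \delta$. Evaluating at $x = \udim P_i$ and using the standard identity $\langle \udim P_i, \udim M\rangle = \dim \Hom(P_i, M) = (\udim M)_i$, I get $\langle \udim P_i, \delta\rangle = \delta_i$, which equals $1$ precisely because $i$ is an extending vertex. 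Hence $c^{-(n-2)}(\udim P_i) = \udim P_i + \delta$.

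To translate this Coxeter identity into a statement about $\tau^{-(n-2)} P_i$ in the module category, I would invoke the relation $c^{-1}\udim M = \udim \tau^{-1} M$, valid for every indecomposable non-injective $kQ$-module $M$ and recalled at the end of Section~1. For an affine quiver, the preprojective component of the Auslander-Reiten quiver is disjoint from the preinjective component, so every iterate $\tau^{-j} P_i$ with $j \geq 0$ remains an indecomposable preprojective module, and in particular is non-injective. Iterating the relation $n-2$ times therefore gives $\udim \tau^{-(n-2)} P_i = c^{-(n-2)}(\udim P_i) = \udim P_i + \delta$, and Theorem~\ref{extending} concludes.

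The only genuine subtlety I anticipate is the inversion of the Coxeter formula, which crucially relies on the observation that $\delta$ is itself fixed by $c^{n-2}$; once that point is noted, the rest of the argument is a routine assembly of facts already established in Section~1 and Section~3, with no case distinction between the four extending vertices $0$, $1$, $n-1$, $n$ needed.
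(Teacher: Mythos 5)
Your proposal is correct and follows exactly the paper's route: the paper derives this theorem immediately from Theorem~\ref{delta} and Theorem~\ref{extending}, and your argument just spells out the routine verification of the hypothesis $\udim \tau^{-(n-2)}P_i=\udim P_i+\delta$ (inverting $c^{n-2}=\id-\langle -,\delta\rangle\delta$ using $c^{n-2}\delta=\delta$, the identity $\langle \udim P_i,\delta\rangle=\delta_i=1$, and $c^{-1}\udim M=\udim\tau^{-1}M$ on the preprojective component) that the paper leaves implicit.
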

\begin{proof} This result follows immediately from \ref{delta} and \ref{extending}.
\end{proof}
\begin{theo}\label{Dn}Suppose that $n$ is odd and $i$ is an
extending vertex  of $Q$.

a) For every regular simple $kQ$-module $M$ with
dimension vector $\delta$, there exist generalized exchange
triangles
\[
P_i \to \tau^{2-n} P_{\sigma i} \to M \to \Sigma P_i
\quad\mbox{ and }\quad
M \to \tau^{n-2} P_{\sigma i} \to P_i \to \Sigma M .
\]

b) The frieze sequence $(X^i_j)_{j \in \Z}$ satisfies the linear
recurrence relation
\[
X^2_{\delta} X^i_j= 2X^i_{j} +X^i_{j-4+2n} +
X^i_{j-2n+4}
\]
for all $ j \ge 2n-4$.
\end{theo}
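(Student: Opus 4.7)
The plan is first to prove (a) by reducing it to Lemma \ref{existence} via Lemma \ref{relation}(a), and then to deduce (b) by applying Theorem \ref{acyclic-main-thm}(b) to the resulting pair of triangles, shifting by powers of $\tau$, and eliminating the $X^{\sigma i}$ terms by combining the relation with its symmetric version obtained by interchanging $i$ and $\sigma i$.

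For part (a), I will start from Lemma \ref{relation}(a), which gives $c^{n-2}(\udim P_i) = \udim P_{\sigma i} - \delta$. Since $c$ acts as $\tau$ on dimension vectors of preprojectives and fixes $\delta$, this rewrites as $\udim \tau^{2-n} P_{\sigma i} = \udim P_i + \delta$. Because $i$ and $\sigma i$ are extending vertices, both $P_i$ and $\tau^{2-n} P_{\sigma i}$ are preprojective indecomposables of defect $-1$, so Lemma \ref{existence} applied with $N = P_i$ and $L = \tau^{2-n} P_{\sigma i}$ yields, for each regular simple $M$ of dimension vector $\delta$, a short exact sequence $0 \to P_i \to \tau^{2-n} P_{\sigma i} \to M \to 0$, which gives the first triangle. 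To obtain the second triangle I will apply the same argument with the roles of $i$ and $\sigma i$ exchanged (legitimate since $\sigma^2 = \id$) using the same $M$; this produces a triangle $P_{\sigma i} \to \tau^{2-n} P_i \to M \to \Sigma P_{\sigma i}$, which I then rotate and shift by $\tau^{n-2}$. Using $\tau M \cong M$ (as $M$ lies in a homogeneous tube) and $\Sigma \cong \tau$ in $\mathcal{C}_Q$, the powers of $\tau$ acting on $M$ all collapse back to $M \cong \Sigma M$, so the result is precisely $M \to \tau^{n-2} P_{\sigma i} \to P_i \to \Sigma M$. Finally, Lemmas \ref{existence} and \ref{ext} together show that $\Ext^1_{\mathcal{C}_Q}(M, P_i)$ is one-dimensional, so the two triangles form a generalized exchange pair.

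For part (b), I will apply $\tau^{-j}$ to both triangles from (a) and use Theorem \ref{acyclic-main-thm}(b), together with the identities $X^i_k = X_{\tau^{-k+1} P_i}$ and $X_M = X_\delta$ (Lemma \ref{lemma:x-delta}), to obtain the coupled relation
\[
X_\delta \cdot X^i_k = X^{\sigma i}_{k+n-2} + X^{\sigma i}_{k-n+2}, \qquad k \ge n-2.
\]
Running the same construction with $i$ and $\sigma i$ swapped yields the symmetric relation. Multiplying the first relation by $X_\delta$ and substituting the second relation at the shifted indices $k+n-2$ and $k-n+2$ (both of which are permissible provided $k \ge 2n-4$) eliminates the $X^{\sigma i}$ contributions and leaves
\[
X_\delta^2 \cdot X^i_k = 2 X^i_k + X^i_{k+2n-4} + X^i_{k-2n+4},
\]
which is the claimed recurrence.

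The main technical care will lie in the bookkeeping of $\tau$-shifts, in particular in correctly translating $c^{n-2}(\udim P_i) = \udim P_{\sigma i} - \delta$ into the power $\tau^{2-n}$ acting on $P_{\sigma i}$, in verifying that the same regular simple $M$ can be used for both triangles, and in tracking the indices through the two elimination steps. No new representation-theoretic input beyond the lemmas of sections 1 and 3 is needed.
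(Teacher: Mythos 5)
Your proposal is correct and follows essentially the same route as the paper: both triangles are obtained from Lemma \ref{existence} via Lemma \ref{relation}(a) (the second by swapping $i$ and $\sigma i$, rotating, and applying $\tau^{n-2}$ with $\tau M \cong \Sigma^{-1}M \cong M$), one-dimensionality of $\Ext^1_{\mathcal{C}_Q}(M,P_i)$ comes from Lemmas \ref{existence} and \ref{ext}, and part (b) is deduced exactly as in the paper by $\tau$-shifting, applying Theorem \ref{acyclic-main-thm}(b), and eliminating the $X^{\sigma i}$ terms between the two coupled relations. No gaps beyond the routine index bookkeeping you already flag.
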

\begin{proof} a) Using \ref{existence} and \ref{relation} there exist
triangles
\[ P_i \to \tau^{2-n} P_{\sigma i} \to M \to \Sigma P_i \]
and
\[P_{\sigma i} \to \tau^{2-n} P_{i} \to M \to \Sigma P_{\sigma i} .\]

Rotating the second triangle, we get a
triangle

\[ \Sigma^{-1} M \to P_{\sigma i} \to \tau^{2-n} P_i \to M .\]

If we apply $\tau^{n-2} $ to it and use the fact that $M\cong
\Sigma^{-1} M$ in $\mathcal{C}_Q$ and $M$ is $\tau$-periodic of
period one, we get a  triangle in $\mathcal{C}_Q$ of
the form

\[ M \to \tau^{n-2} P_{\sigma i} \to P_i \to M .\]
By \ref{ext} and \ref{existence}, these are generalized exchange
triangles.

b) As in the proof of \ref{extending} we can apply powers of
$\tau$ to the  triangles of a) and we get the
 triangles
\[ \tau^{-j} P_i \to \tau^{2-n-j} P_{\sigma i} \to M \to \tau^{-j}\Sigma P_i \]
and
\[ M \to \tau^{n-2-j} P_{\sigma i} \to \tau^{-j} P_i \to M \]
for all $j \in \Z$. By \ref{extending} these
triangles are generalized exchange triangles and we obtain the
relations $X_{\delta}X^i_j=X^{\sigma i}_{n-2+j}+X^{\sigma
i}_{2-n+j}$ and $X_{\delta}X^{\sigma i}_j=X^{i}_{n-2+j}+X^{
i}_{2-n+j}$. Multiplying the first equation with $X_{\delta}$ and
substituting using the second equation gives the stated recurrence
relation.
\end{proof}
Thus we have obtained linear recurrence relations for the
frieze sequences associated with all extending vertices of the quiver
$Q$ of type $\tilde D_n$. Using Auslander-Reiten triangles we will now deduce the
existence of linear recurrence relations for the frieze sequences
associated with neighbours of extending vertices. There is an
Auslander-Reiten triangle
\[ P_n \to P_{n-2} \to \tau^{-1} P_n \to \Sigma P_n. \]
This gives the recurrence relation for the vertex $n-2$.
Similarly, using the Auslander-Reiten triangle

\[ \tau^{-1} P_1 \to P_2 \to P_1 \to \Sigma \tau^{-1} P_1 \]

we obtain the recurrence relation for the vertex $2$. For the
vertex $n-3$ we will use the following exchange triangles

\[ P_{n-2} \to P_{n-3} \to S_{n-3} \to \Sigma P_{n-2}\]
\[ S_{n-3} \to P_{n} \oplus P_{n-1} \to P_{n-2} \to  \Sigma S_{n-3}\]

and for $2 < i < n-3$ we will use the exchange triangles

\[ P_{i+1} \to P_i \to S_i \to \Sigma P_{i+1}\]
\[ S_i \to P_{i+2} \to P_{i+1} \to  \Sigma S_i.\]

These are indeed exchange triangles since we have
\begin{align*}
-1 &=\langle \alpha_{i}, \alpha_{i+1} \rangle = \langle \alpha_i, \sum_{t=i+1}^n \alpha_t
\rangle =\langle \udim S_i , \udim P_{i+1} \rangle \\
&= \dim \Hom(S_i,P_{i+1})-\dim \Ext_{kQ}^1(S_i, P_{i+1}) = -\dim_k
\Ext_{kQ}^1(S_i, P_{i+1})
\end{align*}
for all $ 2<i< n-1$. We therefore
obtain the relations
\[
X^{n-3}_j=X_j^{n-2}X_{\tau^j S_{n-3}}-X^n_j X^{n-1}_j
\]
for all $j \in \Z$ and
\[
X^i_j=X_j^{i+1}X_{\tau^j S_{i}}-X^{i+2}_j
\]
for all $j \in \Z$ and $2<i < n-2$.

Note that the $S_i$ are regular modules for $2 <i < n-3$ lying in
the exceptional tube of length $n-2$ (cf. the tables at the end of
\cite{DlabRingel74a}). Therefore the corresponding frieze sequence
$X_{\tau^j S_i}$ is periodic. We now use descending induction on
the vertices: we can recover linear recursion formulas for the
frieze sequence associated to a vertex $i$ with $ 3<i< n-1$ from the
linear recursion formulas of sequences associated to vertices $i'
> i$ and the periodic sequences $X_{\tau^j S_i}$.

\section{The exceptional types}
\label{s:exceptional-types}
Let $Q$ be of type $\tilde E_t$ for $t\in \{ 6,7,8 \}$. We will use
the following labeling and orientation:
\[\mbox{ for }\tilde E_6: \ \ \ \xymatrix{1 \ar[r] & 2\ar[r]&7& 6 \ar[l] & 5 \ar[l]  \\
&&4 \ar[u]&& \\
&&3 \ar[u] &&},\] the vector $\delta$ is given by
\[\xymatrix{1 \ar[r] & 2\ar[r]&3& 2 \ar[l] & 1 \ar[l]  \\
&&2 \ar[u]&& \\
&&1 \ar[u] &&};\]
\[\mbox{ for } \tilde E_7 : \ \ \ \xymatrix{1 \ar[r] & 2\ar[r]&3 \ar[r] & 8 & 6 \ar[l] & 5 \ar[l] &  4 \ar[l] \\
&&&7 \ar[u]&&& },\]
the vector $\delta$ is given by \[\xymatrix{1 \ar[r] & 2\ar[r]&3 \ar[r] & 4 & 3 \ar[l] & 2 \ar[l] &  1 \ar[l] \\
&&&2 \ar[u]&&& };\]

\[\mbox{ for }\tilde E_8:\ \ \  \xymatrix{1 \ar[r] & 2\ar[r]&3 \ar[r] & 4 \ar[r]& 5 \ar[r] &9& 8\ar[l] & 7 \ar[l]  \\
&&&&&6 \ar[u]&& },\] the vector $\delta$ is given by
\[ \xymatrix{1 \ar[r] & 2\ar[r]&3 \ar[r] & 4 \ar[r]& 5 \ar[r] &6& 4\ar[l] & 2 \ar[l]  \\
&&&&& 3 \ar[u]&& }.\]

\begin{theo} Let $i$ be an extending vertex of $Q$. Then the frieze
sequence $(X^i_j)_{j \in \Z}$ satisfies the linear recurrence relation
\[
X_{\delta} X^i_j= X^i_{j-b} + X^i_{j+b}
\]
for all $j \in \Z$ where $b$ is as in \ref{delta}.
\end{theo}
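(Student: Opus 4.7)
The plan is to deduce this theorem directly from Theorem~\ref{extending} by verifying, for every extending vertex $i$ of $Q$, the single dimension-vector identity $\udim \tau^{-b} P_i = \udim P_i + \delta$. Once this identity is in place, Theorem~\ref{extending} produces exactly the stated linear recurrence $X_\delta X^i_j = X^i_{j-b} + X^i_{j+b}$, so there is nothing else to check.

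To establish the dimension-vector identity, I would first note that $P_i$ lies in the infinite preprojective component of the Auslander-Reiten quiver of $\mod kQ$, which contains no injective module, so $\tau^{-b} P_i$ is well defined in $\mod kQ$ and satisfies $\udim \tau^{-b} P_i = c^{-b}(\udim P_i)$. By part~(1) of Theorem~\ref{delta}, in each exceptional affine type $\tilde E_t$ we have $m=1$, so that
\[
c^b \;=\; \id - \langle -, \delta\rangle\,\delta .
\]
Since $P_i$ is projective, $\Ext^1_{kQ}(P_i,-)$ vanishes and the Euler form evaluates as
\[
\langle \udim P_i, \delta \rangle \;=\; \dim \Hom(P_i, N) \;=\; \delta_i
\]
for any module $N$ with $\udim N = \delta$. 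Because $i$ is an extending vertex, $\delta_i=1$, and consequently
\[
c^b(\udim P_i) = \udim P_i - \delta, \qquad c^{-b}(\udim P_i) = \udim P_i + \delta ,
\]
which is the required identity.

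Combining these two observations, the hypothesis of Theorem~\ref{extending} is satisfied for every extending vertex $i$ of $Q$, and the recurrence follows at once. There is no genuine obstacle here: all the nontrivial content has been absorbed into Theorem~\ref{delta}, where the explicit values $b=6,12,30$ and $m=1$ are obtained by the case-by-case computation indicated in its proof, and into Theorem~\ref{extending}, which builds the generalized exchange triangles and translates them via the Caldero-Chapoton map. The only additional step is the short Euler-form computation above, which is immediate from the projectivity of $P_i$.
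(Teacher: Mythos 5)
Your proof is correct and takes essentially the same route as the paper, which deduces the theorem directly from Theorem~\ref{delta} and Theorem~\ref{extending}; you have simply spelled out the routine verification (via $m=1$, $\delta_i=1$, $c$ fixing $\delta$, and $\udim\tau^{-b}P_i=c^{-b}\udim P_i$ on the preprojective component) that the hypothesis of Theorem~\ref{extending} holds for every extending vertex in the exceptional types.
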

\begin{proof} This result follows immediately by \ref{delta} and \ref{extending}.
\end{proof}

Let $l\in Q_0$ be a vertex attached to an extending vertex $i$. Then
the projective indecomposable module associated with $l$ appears in
an Auslander-Reiten triangle
\[ P_i \to P_l \to \tau^{-1} P_i \to \Sigma  P_i .\]

This gives us the following relation between the frieze sequence associated
with the vertex $l$ and the sequence associated with the extending
vertex
\[ X^l_j= X^i_jX^i_{j+1}-1 \] for all $j \in \Z$. By
\ref{ProductSum}, the sequence $X^l_j$ satisfies a linear recursion
relation.

Let now $l \in Q_0$ be a vertex such that there is an oriented path
$i_0=s, \ldots , i_t=l$ from an extending vertex $s \in Q_0$ to
$l$ in $Q$ of length at least two. Then there are exchange triangles
of the form

\[P_s \to P_l  \to \tau^{-1} P_{i_{t-1}}  \to \Sigma P_s\]

and

\[ \tau^{-1} P_{i_{t-1}} \to \tau^{-2} P_{i_{t-2}} \to P_s \to
\Sigma \tau^{-1} P_{i_{t-1}}.
\]
This gives the following relation between the sequences associated
with the vertices appearing in the oriented path
$$ X^s_j X^{i_{t-1}}_{j-1}= X^l_j+ X^{i_{t-2}}_{j-2}$$ for all $j \in \Z$.
As all vertices connected to an extending vertex satisfy a linear
recurrence relation by the previous case, we can assume that the
sequences $X^{i_{t-1}}_j$ and $X^{i_{t-2}}_j$ satisfy a linear
recursion using induction on the path length. Then the sequence
$(X^l_j)_{j \in \Z}$ also satisfies a linear recursion relation. In
the case $\tilde E_6$, for every non-extending vertex $l$ of $Q$,
there is an extending vertex and an oriented path from the extending
vertex to $l$. Therefore, we obtain linear recurrence relations for
all vertices of the quiver $Q$ of type $\tilde E_6$.

In the case $\tilde E_7$, only the vertex labeled $7$ can not be
reached by an oriented path starting in an extending vertex. In this
case, we consider the exchange triangles

\[ P_1 \to \tau^{-1} P_7 \to \tau^{-4} P_4 \to \tau P_1 \]

and

\[ \tau^{-4} P_4 \to N \to P_1 \to \tau^{-3} P_4 \ko \]

where $N$ is the cokernel of any non-zero morphism $\tau^{-1} P_1 \to \tau^{-4} P_4$. Then
$\tau N$ is the cokernel of the map $P_1 \to \tau^{-3}P_4$.
It is the indecomposable regular simple module of dimension
vector $001100011$ which belongs to the mouth of the tube of width
$4$ (cf. the tables at the end of \cite{DlabRingel74a}).

For $ \tilde E_8$ we use an analogous method. The vertices $6$, $7$
and $8$ can not be reached by an oriented path starting in an
extending vertex. Therefore we consider the following exchange
triangles
\[ P_1 \to \tau^{-2} P_7 \to \tau^{-7}P_1 \to \tau P_1\]
and
\[ \tau^{-7} P_1 \to N \to P_1 \to \tau^{-6} P_1 \ko \]
where $N$ is the cokernel of any non-zero morphism
$\tau^{-1} P_1 \to \tau^{-7} P_1$.
It is the regular simple module of dimension vector $001111001$
which belongs to the mouth of the tube of width 5 by \cite[page
49]{DlabRingel74a}. Hence the sequence $X_{\tau^i N }$ is periodic.
These triangles give the relation $
X^7_j=X^1_{j+2}X^1_{j-5}-X_{\tau^{j-2}N}$, which proves that the
sequence at the vertex $7$ satisfies a linear recurrence relation.
The next exchange triangles are given by
\[ P_1 \to  \tau^{-1} P_6 \to \tau^{-3}P_7 \to \tau P_1 \]
and
\[ \tau^{-3}P_7 \to  \tau^{-8}P_1 \to P_1 \to \tau^{-2}P_1. \]    Here the relation is $X^6_j =X^1_{j+1}X^7_{j-2}-X^1_{j-7}$.
Finally, the last exchange triangles can be taken as
\[P_1 \to \tau^{-1} P_8 \to \tau^{-2} P_6 \to \tau P_1 \]
and
\[  \tau^{-2}P_6 \to \tau^{-4}P_7 \to P_1 \to \tau^{-1}P_6 .\]This gives the exchange relation $X^8_j =X^1_{j+1}X^6_{j-1}-X^7_{j-3}$. This proves that all frieze
sequences associated with vertices of the exceptional quivers satisfy linear recurrence relations.

\section{Type $\tilde A_{p,q}$}
We choose $p, q \in \N$ such that $ q \le p$ and use the same
labeling and orientation for $Q$ as in the proof of \ref{delta}. We view the
labels of vertices modulo $p+q$. Note that all vertices of $Q$ are extending vertices.

\begin{theo}\label{vectorsequence}
(a) For every vertex $i\in Q_0$ and every regular simple module
$M$ with dimension vector $\delta$, there are generalized exchange
triangles

\[ P_i \to \tau^{l_i} P_{i-q} \to M \to \Sigma P_i \]

and

\[ M \to \tau^{r_i} P_{i+q} \to P_i \to \Sigma M \ko\]
where $l_i=i-q$ for $ 0\le i \le q$ and $l_i=\mbox{max}\{q-i, -q\}$
for $q < i$ and $r_i=-l_{i+q}$.

(b) We obtain relations between the frieze sequences associated to the
vertices $i$, $i+q$ and $i-q$ of the form
\[ X_{\delta} X^i_j=X^{i+q}_{j+l_i}+X^{i-q}_{j+r_i} \] for all $i\in Q_0$ and $j \ge n$.
\end{theo}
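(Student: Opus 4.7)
The plan is to follow the template used for the $\tilde D$ and $\tilde E$ cases: build two generalized exchange triangles in $\mathcal{C}_Q$ via Lemma \ref{existence} and a suitable twist by $\tau$, then read off the relation from Theorem \ref{acyclic-main-thm}~b) together with the Corollary identifying $X^j_k=X_{\tau^{-k+1}P_j}$.

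For the first triangle in (a), I apply Lemma \ref{existence} with $L=\tau^{l_i}P_{i-q}$ and $N=P_i$. Every vertex of $\tilde A_{p,q}$ is extending, so $\delta_j=1$ and the defect $\langle\delta,\udim P_j\rangle=-\delta_j$ equals $-1$; since $\tau$ preserves defect, both $P_i$ and $\tau^{l_i}P_{i-q}$ have defect $-1$. A direct inspection of the cases defining $l_i$ shows $l_i\le 0$ in all cases, so $\tau^{l_i}P_{i-q}$ is preprojective, and the dimension vector identity $\udim\tau^{l_i}P_{i-q}=\udim P_i+\delta$ is Lemma \ref{relation}~b) translated via $c\udim N=\udim \tau N$. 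Lemma \ref{existence} then produces the required short exact sequence, and Lemma \ref{ext} together with $\dim\Ext^1_{kQ}(M,P_i)=1$ upgrades the associated triangle in $\mathcal{D}_Q$ to a generalized exchange triangle in $\mathcal{C}_Q$.

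For the second triangle, Lemma \ref{existence} cannot be invoked directly with $N=\tau^{r_i}P_{i+q}$, because $r_i=-l_{i+q}\ge 0$ and so $\tau^{r_i}P_{i+q}$ is generally not a $kQ$-module. Instead, I run the first construction with $i$ replaced by $i+q$, obtaining the triangle $P_{i+q}\to\tau^{l_{i+q}}P_i\to M\to\Sigma P_{i+q}$, then apply the auto-equivalence $\tau^{r_i}$ and use that $M$ belongs to a homogeneous tube, so $\tau^{r_i}M\cong M$, reaching $\tau^{r_i}P_{i+q}\to P_i\to M\to\Sigma\tau^{r_i}P_{i+q}$; rotation together with $\Sigma^{-1}M\cong\tau^{-1}M\cong M$ in $\mathcal{C}_Q$ then yields the desired triangle $M\to\tau^{r_i}P_{i+q}\to P_i\to\Sigma M$.

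Part (b) follows by applying Theorem \ref{acyclic-main-thm}~b) to the pair $(P_i,M)$, which has $\dim\Ext^1_{\mathcal{C}_Q}(P_i,M)=1$ by Lemma \ref{ext}; the two triangles of (a) identify the middle terms as $\tau^{l_i}P_{i-q}$ and $\tau^{r_i}P_{i+q}$, yielding $X_{P_i}X_M=X_{\tau^{l_i}P_{i-q}}+X_{\tau^{r_i}P_{i+q}}$. Applying $\tau^s$ to both triangles for varying $s$ (using $\tau M\cong M$), then invoking Lemma \ref{lemma:x-delta} and the Corollary to convert back into frieze indices, yields the claimed family of relations at the vertices $i$, $i-q$, $i+q$ with shifts governed by $l_i$ and $r_i$; the bound $j\ge n$ is exactly what is needed so that all the shifted indices land in $\N$, since $|l_i|,|r_i|\le q\le n$. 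The main obstacle is the construction of the second triangle: because $\tau^{r_i}P_{i+q}$ need not be a module, it cannot be produced directly by Lemma \ref{existence} and must instead be transported from the first construction via the auto-equivalence $\tau^{r_i}$ of the cluster category.
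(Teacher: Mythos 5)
Your proposal is correct and follows essentially the same route as the paper: the first triangle comes from Lemma \ref{existence} combined with Lemma \ref{relation}(b) (translated into dimension vectors of $\tau$-shifts), the second is obtained from the first with $i$ replaced by $i+q$ by rotating and applying $\tau^{r_i}$, using that $M$ is $\tau$-periodic of period one, and (b) follows by applying powers of $\tau$ and the multiplication formula of Theorem \ref{acyclic-main-thm}(b). The only differences are cosmetic (order of rotation versus applying $\tau$, and your explicit remark on why Lemma \ref{existence} cannot be applied directly to $\tau^{r_i}P_{i+q}$, which the paper leaves implicit).
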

\begin{proof}
Using \ref{existence} and \ref{relation} we obtain the existence
of the first  triangle. If we replace $i$ by $i+q$ in
the first triangle and perform a rotation, we obtain the triangle
\[ M \to P_{i+q} \to \tau^{l_{i+q} } P_i \to \Sigma M. \] Applying
$\tau^{-l_{i+q}}$ to this triangle gives the second triangle.
Exactly as in the proof of \ref{Dn} we can apply powers of $\tau$
to the generalized exchange triangles and we obtain the recurrence
relations stated.
\end{proof} If $p$ equals $q$ the relation from the previous Theorem yields \[ X_{\delta} X^i_j=X^{i+q}_{j+l_i}+X^{i+q}_{j+r_i} \] for all $i\in Q_0$ and $j \ge n$ as $i+q=i-q$ seen modulo $2q$.
If we iterate once, we obtain \[
(X_{\delta}^2-2)X^i_j=X^{i}_{j-q}+X^i_{j+q},\]
%$ X_{\delta} ^2X^i_j=X_{\delta} X^{i+q}_{j+l_i}+X_{\delta} X^{i+q}_{j+r_i}= X^{i}_{j+l_i-r_i}+X^i_{j} +X^i_{j}+X^i_{j+r_i-l_i}$
using the fact that $r_i-l_i=q$ for all $i\in Q_0$ and $j \ge q$.
Hence we can see immediately that all frieze sequences associated to
vertices of the quiver $Q$ of type $\tilde A_{q,q}$ satisfy a linear recurrence relation.
In the case $p \not = q$ we need a different argument. We consider
the sequence of vectors $(v(j))_{j\in \N}$ given by $v(j)=(X^0_j,
\ldots, X^n_j)$. Then by \ref{vectorsequence}, there are matrices
$A_0, \ldots , A_{n}$ such that $v(j+n+1) =\sum_{t=0}^{n} A_t
v(j+t)$ for all $j \in\N$. Using \ref{ProductSum} b), it is clear
that the frieze sequence associated with any vertex $i$ satisfies a linear
recurrence relation.

\section{Non simply laced types}
If $Q$ is a finite quiver without oriented cycles which is
endowed with a valuation (cf. \cite{DlabRingel74a}),
one can define frieze sequences in a natural way.
We refer to chapter~3, equation~(1) of
\cite{AssemReutenauerSmith09} for the exact definition and to the
appendix of \cite{AssemReutenauerSmith09} for the list of affine
Dynkin diagrams, which underlie the affined valued quivers.
As in section 7.3 of \cite{AssemReutenauerSmith09}, we can
obtain the linear recurrence relation for a frieze sequence associated with
a vertex of a  valued quiver of affine type from the linear
recurrence relation of a frieze sequence associated with the vertices of a non
valued affine quiver. This can be done using the folding technique.
An introduction to the folding technique can for example be found in
section 2.4 of \cite{FominZelevinsky03b}.
\begin{theo}
The frieze sequences associated with vertices of a quiver of the type $\tilde G_{21}$,
$\tilde G_{22}$, $\tilde F_{41}$, $\tilde F_{42}$,
$\tilde A_{11}$ or $\tilde A_{12}$ satisfy linear recurrence
relations.
\end{theo}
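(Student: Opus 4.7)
The plan is to reduce the statement to the simply-laced case, which has already been settled for all affine quivers in sections~6, 7 and~8 of this paper, via the folding technique. Each of the listed affine valued quivers arises as the orbit quiver of a simply-laced affine quiver $\tilde Q$ under a finite group $G$ of admissible quiver automorphisms: $\tilde G_{21}$ and $\tilde G_{22}$ are quotients of $\tilde D_4$ by an order-three rotation, $\tilde F_{41}$ and $\tilde F_{42}$ are quotients of $\tilde E_6$ by the order-two symmetry, and $\tilde A_{11}$, $\tilde A_{12}$ are quotients of suitable $\tilde A$-type quivers by an order-two involution. These foldings, together with compatible orientations on $\tilde Q$ and the induced valuations on $Q$, are listed in the appendix of \cite{AssemReutenauerSmith09}.

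For each such pair $(\tilde Q, G)$, the folding procedure (see section~2.4 of \cite{FominZelevinsky03b} and section~7.3 of \cite{AssemReutenauerSmith09}) yields a surjection $\pi : \tilde Q_0 \to Q_0$ whose fibers are the $G$-orbits, and a $\Q$-algebra homomorphism $\phi : \Q(x_{\tilde i})_{\tilde i \in \tilde Q_0} \to \Q(x_i)_{i\in Q_0}$ which sends $x_{\tilde i}$ to $x_{\pi(\tilde i)}$. First I would check, by induction on $j$ and by comparing the defining exchange relations, that this homomorphism carries the frieze sequence of $\tilde Q$ to that of $Q$, i.e.~that
\[
\phi(\tilde X^{\tilde i}_j) = X^{\pi(\tilde i)}_j
\]
for every $\tilde i \in \tilde Q_0$ and every $j \in \N$. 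The point is that $G$-orbits of arrows of $\tilde Q$ starting or ending at $\tilde i$ correspond, via the valuation of $Q$, to the arrows entering and leaving $\pi(\tilde i)$ in $Q$, so that applying $\phi$ to the recurrence defining $\tilde X^{\tilde i}_{j+1}$ reproduces exactly the recurrence defining $X^{\pi(\tilde i)}_{j+1}$.

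Given this compatibility, the conclusion is immediate: by the main theorem applied to the simply-laced affine quiver $\tilde Q$, the sequence $(\tilde X^{\tilde i}_j)_{j\in\N}$ satisfies a linear recurrence with coefficients in $\Q(x_{\tilde i})_{\tilde i \in \tilde Q_0}$; applying the ring homomorphism $\phi$ to such a recurrence yields a linear recurrence over $\Q(x_i)_{i \in Q_0}$ satisfied by $(X^{\pi(\tilde i)}_j)_{j\in\N}$, and surjectivity of $\pi$ covers every vertex of $Q$. The main (and only) obstacle is the verification of the folding compatibility displayed above, which matches the combinatorics of $G$-orbits on the arrows of $\tilde Q$ with the valuation of $Q$; but this is precisely the situation folding was designed to handle, and the argument runs as in section~7.3 of \cite{AssemReutenauerSmith09}, so no new difficulty arises.
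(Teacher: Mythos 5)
Your strategy is exactly the one the paper uses: reduce each valued affine type to a simply-laced affine quiver already treated in sections~6--8 by folding, check that the specialization homomorphism $\phi$ intertwines the two frieze recursions, and push a linear recurrence for $\tilde Q$ through $\phi$. That reduction mechanism (your second and third paragraphs) is sound and is the content of section~7.3 of Assem--Reutenauer--Smith that the paper invokes.

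The genuine gap is in your list of unfoldings, which as stated fails for three of the six types. A fixed simply-laced diagram folded by a fixed conjugacy class of admissible subgroups of its automorphism group produces only \emph{one} valued quiver: all involutions in $\aut(\tilde E_6)\cong S_3$ are conjugate, so folding $\tilde E_6$ by an order-two symmetry yields a single one of the two (mutually dual) $\tilde F_4$-type valued quivers, never both -- the other one, with the doubly-valued edge pointing the other way along the five-vertex path, arises only from $\tilde E_7$ folded by its order-two symmetry. Likewise all order-three subgroups of $\aut(\tilde D_4)\cong S_4$ are conjugate, so $\tilde D_4/(\Z/3\Z)$ gives only one of $\tilde G_{21}$, $\tilde G_{22}$; the other requires folding $\tilde E_6$ by its rotation of order three. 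Finally, one of $\tilde A_{11}$, $\tilde A_{12}$ carries the valuation $(1,4)$, and no quotient of a quiver of type $\tilde A$ can produce it: every vertex of a cycle has degree two, so in any admissible folding the number of arrows from a given vertex into a fixed orbit is at most two, bounding every valuation entry by $2$. (Your suggestion does handle the $(2,2)$-valued quiver, which one may fold from the alternating $4$-cycle by the antipodal involution, but not its companion.) So, taken literally, your argument covers only half of the listed types; the repair is the assignment the paper actually uses: $\tilde G_{22}$ and $\tilde F_{42}$ are obtained from $\tilde E_6$ (by $\Z/3\Z$, respectively $\Z/2\Z$), $\tilde F_{41}$ from $\tilde E_7$ (by $\Z/2\Z$), and $\tilde G_{21}$, $\tilde A_{11}$, $\tilde A_{12}$ from $\tilde D_4$ (by $\Z/3\Z$, $\Z/4\Z$ and $\Z/2\Z\times\Z/2\Z$), with $G$-stable orientations chosen on the covering quivers.
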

We obtain the linear recurrence relation for a frieze sequence associated
to a vertex of a quiver of type $ \tilde G_{22}$ or $\tilde  F_{42}$ by folding
$\tilde E_6$ using the obvious action by $\Z/3\Z$ respectively $\Z/2\Z$.
The linear recurrence relations in the case
$\tilde F_{41}$ are obtained by folding $\tilde E_7$ using a natural action by $\Z/2\Z$.
In the cases $\tilde G_{21}$, $\tilde A_{11}$ or $\tilde A_{12}$, they are obtained by
folding $\tilde D_4$ using actions of $\Z/3\Z$, $\Z/4\Z$ and $\Z/2\Z \times \Z/2\Z$
respectively.

\section{On the minimal linear recurrence relations}

For type $\tilde D_4$ (with the vertices numbered as in
the proof of Theorem~\ref{delta}), one checks that the
following are the polynomials of the minimal linear recurrence relations:
\[
\mbox{vertices $0$, $1$, $3$, $4$ : } \lambda^4 - X_\delta \lambda^2 +1 \ko\quad
\mbox{vertex $2$ : } (\lambda-1)(\lambda^2 - X_\delta \lambda +1) \ko
\]
where
\begin{align*}
X_\delta = &\frac{x_0^2 x_1^2 +2 x_0^2 x_1^2 x_2 + x_0^2 x_1^2 x_2^2 + 4
x_0 x_1 x_2 x_3 x_4  + 2 x_0 x_1 x_3 x_4  }{x_0 x_1 x_2^2 x_3 x_4} \\
 &+ \frac{x_2^2 x_3^2 x_4^2 +2 x_2 x_3^2 x_4^2+ x_3^2 x_4^2}{x_0 x_1 x_2^2 x_3 x_4} .
\end{align*}

Most of the recurrence relations one obtains from our proofs are
not minimal. However, we conjecture that for type $\tilde{A}$, they
are. In the following tables, for each vertex $i$ of
a quiver $Q$ of type $\tilde D$ or $\tilde E$, we exhibit a polynomial which
we conjecture to be associated with the minimal linear recurrence
relation for the frieze sequence $(X^i_j)_{j\in\N}$. Our conjecture
is based on the relations we have found and on numerical evidence
obtained using Maple. For an integer $d$ and an element $c$
of the field $\mathcal{F}=\Q(x_0, \ldots, x_n)$, where $n+1$ is the number of
vertices of $Q$, we put
\[
P(2d, c) = \lambda^{2d} - c \lambda^d + 1.
\]
The element $X_\delta$ of the field $\mathcal{F}$ is
always defined as after Lemma~\ref{lemma:x-delta}.
For type $\tilde{D}_n$, we number the vertices as in the proof
of Theorem~\ref{delta} and for the exceptional types as in
section~\ref{s:exceptional-types}.
For type $\tilde D_n$, where $n>4$ is even, we conjecture the
following minimal polynomials.
Notice that the polynomials for $\tilde D_4$ are not obtained by specializing
$n$ to $4$ in this table.
\begin{center}
\begin{tabular}{|c|c|l|} \hline
vertex & degree & polynomial \\ \hline
$0,1,n-1,n$ & $2n-4$&  $ P(2n-4, X_\delta)$ \\
$2, \ldots, n/2-1$ & $2n-4$ & $(\lambda^{n-2}-1) P(n-2, X_\delta) P(n-2, -X_\delta)$ \\
$n/2$ & $3n/2 -3$ & $(\lambda^{n/2-1}-1) P(n-2, X_\delta)$ \\ \hline
\end{tabular}
\end{center}
For type $\tilde D_n$, where $n>3$ is odd, we conjecture the
following minimal polynomials.
\begin{center}
\begin{tabular}{|c|c|l|} \hline
vertex  & degree & polynomial \\ \hline
$0,1,n-1,n$ & $4n-8$ & $P(4n-8,X_\delta)$ \\
$n/2$ & $2n-4$ & $(\lambda^{n-2}-1)P(n-2,X_\delta)$ \\ \hline
\end{tabular}
\end{center}
For $\tilde E_6$, we conjecture the following minimal polynomials.
\begin{center}
\begin{tabular}{|c|c|l|} \hline
vertex & degree & polynomial \\ \hline
$1,3,5$ & $12$ & $P(12,X_\delta)$ \\
$2,4,6$ & $9$ & $(\lambda^3-1) P(6,X_\delta)$ \\
$7$ & $ 16$ & $P(4,X_\delta) P(12, X_\delta)$ \\ \hline
\end{tabular}
\end{center}
For $\tilde E_7$, we conjecture the following minimal polynomials.
\begin{center}
\begin{tabular}{|c|c|l|} \hline
vertex & degree &  polynomial \\ \hline
$1,4$ & $24$ & $P(24, X_\delta)$ \\
$2,5$ & $36$ & $(\lambda^{12}-1) P(12, X_\delta) P(12,-X_\delta)$ \\
$3,6$ & $32$ & $P(24,X_\delta) P(8, X_\delta)$ \\
$7$ & $18$ & $(\lambda^6-1) P(12, X_\delta)$ \\
$8$ & $24$ & $(\lambda^6-1) P(12, X_\delta) P(6, -X_\delta)$ \\ \hline
\end{tabular}
\end{center}
For $\tilde E_8$, we conjecture the following minimal polynomials.
\begin{center}
\begin{tabular}{|c|c|l|} \hline
vertex & degree & polynomial \\ \hline
$1$ & $60$ & $P(60, X_\delta)$ \\
$2,7$ & $45$ & $(\lambda^{15}-1) P(30,X_\delta)$ \\
$3,6$ & $80$ & $P(60,X_\delta) P(20,X_\delta)$ \\
$4,8$ & $75$ & $(\lambda^{15}-1) P(30, X_\delta) P(30, X_\delta^2 -2)$ \\
$5$ & $132$ & $P(60,X_\delta) P(12, X_\delta) P(60, X_\delta^3 - 3 X_\delta)$ \\
$9$ & $85$ & $(\lambda^{15}-1) P(30,X_\delta) P(30, X_\delta^2-2) P(10,X_\delta)$ \\ \hline
\end{tabular}
\end{center}
Notice that for $\tilde E_6$ and $\tilde E_8$, the polynomial associated with
a vertex $i$ only depends on the coefficient $\delta_i$ of
the root $\delta$ but that the analogous statement for
 $\tilde E_7$ does not hold since the polynomials associated
 with the vertices $3$ and $6$ are different.

% \bibliographystyle{amsplain}
%\bibliography{stanKeller}

% \end{document}

\def\cprime{$'$} \def\cprime{$'$}
\providecommand{\bysame}{\leavevmode\hbox to3em{\hrulefill}\thinspace}
\providecommand{\MR}{\relax\ifhmode\unskip\space\fi MR }
% \MRhref is called by the amsart/book/proc definition of \MR.
\providecommand{\MRhref}[2]{%
  \href{http://www.ams.org/mathscinet-getitem?mr=#1}{#2}
}
\providecommand{\href}[2]{#2}

\end{document}